 \newtheorem{theorem}{Theorem}[section]
 \newtheorem{corollary}[theorem]{Corollary}
 \newtheorem{lemma}[theorem]{Lemma}
 \newtheorem{proposition}[theorem]{Proposition}
 \theoremstyle{definition}
 \theoremstyle{remark}
 \numberwithin{equation}{section}
\def\R{{\mathbb R}}
\def\S{{\mathscr S}}
\def\e{{\rm e}}
\def\d{{\rm d}}
\def\ri{{\rm i}}
\def\BMO{{\rm BMO}}
\def\Md{{\mathfrak M}}
\def\:{{\colon}}
\def\<{\langle}
\def\>{\rangle}
\def\be#1{\begin{equation}\label{#1}}
\def\ee{\end{equation}}
\title[Generalised Gagliardo--Nirenberg inequalities] {Generalised Gagliardo--Nirenberg inequalities using weak Lebesgue spaces and BMO}
\author[D.\ S.\ McCormick]{David S.\ McCormick}
\thanks{DSMcC is a member of the Warwick ``MASDOC'' doctoral training centre, which is funded by EPSRC grant EP/HO23364/1. JCR is supported by an EPSRC Leadership Fellowship EP/G007470/1.}
\address{D.\ S.\ McCormick \\
Mathematics Institute \\
University of Warwick \\
Coventry, CV4 7AL \\
United Kingdom}
\email{d.s.mccormick@warwick.ac.uk}
\author[J.\ C.\ Robinson]{James C.\ Robinson}
\address{J.\ C.\ Robinson \\
Mathematics Institute \\
University of Warwick \\
Coventry, CV4 7AL \\
United Kingdom}
\email{j.c.robinson@warwick.ac.uk}
\author[J.\ L.\ Rodrigo]{Jose L.\ Rodrigo}
\address{J.\ L.\ Rodrigo \\
Mathematics Institute \\
University of Warwick \\
Coventry, CV4 7AL \\
United Kingdom}
\email{j.l.rodrigo@warwick.ac.uk}
\date{\today}
\subjclass[2010]{Primary 42B37, 46E35; Secondary 46B70, 30H35}
\keywords{Gagliardo--Nirenberg inequality, interpolation inequality, Ladyzhenskaya inequality, weak Lebesgue space, BMO}
\begin{document}

\begin{abstract}
Using elementary arguments based on the Fourier transform we prove that for $1\le q<p<\infty$ and $s\ge0$ with $s>n(1/2-1/p)$, if $f\in L^{q,\infty}(\R^n)\cap\dot H^s(\R^n)$ then $f\in L^p(\R^n)$ and there exists a constant $c_{p,q,s}$ such that
$$
\|f\|_{L^p}\le c_{p,q,s}\|f\|_{L^{q,\infty}}^\theta\|f\|_{\dot H^s}^{1-\theta},
$$
where $1/p=\theta/q+(1-\theta)(1/2-s/n)$. In particular, in $\R^2$  we obtain the generalised Ladyzhenskaya inequality $\|f\|_{L^4}\le c\|f\|_{L^{2,\infty}}^{1/2}\|f\|_{\dot H^1}^{1/2}$. We also show that for $s=n/2$ the norm in $\|f\|_{\dot H^{n/2}}$ can be replaced by the norm in BMO. As well as giving relatively simple proofs of these inequalities, this paper provides a brief primer of some basic concepts in harmonic analysis, including weak spaces, the Fourier transform, the Lebesgue Differentiation Theorem, and Calderon--Zygmund decompositions.\end{abstract}

\maketitle

\section{Introduction}

For $1\le q<p<\infty$ the Gagliardo--Nirenberg interpolation inequality (Nirenberg  \cite{Niren})
\begin{equation}\label{GN}
\|f\|_{L^p}\le c\|f\|_{L^q}^{\theta}\|f\|_{\dot H^s}^{1-\theta},\qquad\frac{1}{p}=\frac{\theta}{q}+(1-\theta)\left(\frac{1}{2}-\frac{s}{n}\right)
\end{equation}
   is an extremely useful tool in the analysis of many partial differential equations. In particular, in the mathematical theory of the two-dimensional Navier--Stokes equations it is frequently encountered in the form of Ladyzhenskaya's inequality (Ladyzhenskaya \cite{Ladyz})
\begin{equation}\label{realLadyz}
\|f\|_{L^4}\le
c\|f\|_{L^2}^{1/2}\|\nabla f\|_{L^2}^{1/2}.
\end{equation}
This paper provides an introduction to some of the basic ideas of harmonic analysis, as a means of generalising the Gagliardo--Nirenberg inequality in two directions.

First, using only simple properties of the weak $L^p$ spaces (Section \ref{sec:weakLp}) and the Fourier transform (Section \ref{sec:FT}), we show that one can replace the $L^q$ norm on the right-hand side of (\ref{GN}) by the norm in the weak $L^q$ space:
\begin{equation}\label{ourineq}
\|f\|_{L^p}\le c\|f\|_{L^{q,\infty}}^\theta\|f\|_{\dot H^s}^{1-\theta}.
\end{equation}
Along the way we also provide a proof of various forms of Young's inequality for convolutions (Section \ref{sec:Young}) and the endpoint Sobolev embedding $\dot H^s(\R^n)\subset L^p(\R^n)$ for $s=n(1/2-1/p)$, $2<p<\infty$ (Section \ref{sec:EPSE}). To our knowledge the direct proof of (\ref{ourineq}) that we provide here in Section \ref{sec:GGN1} is new.

We note that, in particular, (\ref{ourineq}) provides the following generalisation of the 2D Ladyzhenskaya inequality:
\begin{equation}\label{2DgLadyz}
\|f\|_{L^4}\le c\|f\|_{L^{2,\infty}}^{1/2}\|\nabla f\|_{L^2}^{1/2}.
\end{equation}
We outline at the end of Section \ref{sec:Young} how this inequality is relevant for an analysis of the coupled system
\begin{align*}
-\Delta u+\nabla p&=(B\cdot\nabla)B,\qquad\nabla\cdot u=0,\\
\frac{\d B}{\d t}+\eta\Delta B+(u\cdot\nabla)B&=(B\cdot\nabla)u,\qquad\nabla\cdot B=0,
\end{align*}
on a two-dimensional domain (for full details see McCormick et al.\ \cite{MRR}). This system arises from the theory of magnetic relaxation for the generation of stationary Euler flows (see Moffatt \cite{M}), and was our original motivation for pursuing generalisations of (\ref{realLadyz}) and then of (\ref{GN}).

Related to the case $s=n/2$ in (\ref{GN}), Chen \& Zhu \cite{CZ} (see also Azzam \& Bedrossian \cite{AB}; Dong \& Xian \cite{DX}; Kozono \& Wadade \cite{KW}) obtain the inequality
\begin{equation}\label{AzBed}
\|f\|_{L^p}\le c\|f\|_{L^q}^{q/p}\|f\|_\BMO^{1-q/p},
\end{equation}
where BMO is the space of functions with bounded mean oscillation (see Section \ref{sec:BMO}). This inequality (cf.\ Exercise 7.4.1 in Grafakos \cite{Graf2}) is stronger than (\ref{GN}) since $\|f\|_\BMO\le c\|f\|_{\dot H^{n/2}}$ (see Lemma \ref{Hn2nBMO}). In fact one can obtain a stronger inequality still, weakening the $L^q$ norm on the right-hand side as we did in our transition from (\ref{GN}) to (\ref{ourineq}):
\begin{equation}\label{withBMO}
\|f\|_{L^p}\le c\|f\|_{L^{q,\infty}}^{q/p}\|f\|_{\rm BMO}^{1-q/p}.
\end{equation}
 In Section \ref{sec:GGN2} we adapt the proof used in \cite{CZ} for (\ref{AzBed}) to prove (\ref{withBMO}); their argument makes use of the John--Nirenberg inequality for functions in BMO, which is proved via a Calderon--Zygmund type decomposition (Section \ref{sec:BMO}). This decomposition in turn makes use of the Lebesgue Differentiation Theorem (Theorem \ref{LDT}).




 One can prove (\ref{withBMO}), and a slightly stronger inequality involving Lorentz spaces,
 $$
 \|f\|_{L^{p,1}}\le c\|f\|_{L^{q,\infty}}^{q/p}\|f\|_\BMO^{1-q/p},\qquad 1<q<p<\infty,
 $$
 using the theory of interpolation spaces (as in McCormick et al.\ \cite{MRR}); see Corollary \ref{GNLorentz} (and also Kozono et al.\ \cite{KMW}). For the sake of completeness we briefly recall the theory of interpolation spaces in Section \ref{sec:ispaces} and give a proof of this inequality.

 Since it provides one of the main applications of weak $L^p$ spaces, we include a final section that contains a statement of the Marcinkiewicz interpolation theorem and some of its consequences, including a strengthened form of Young's inequality. A very readable account of all the harmonic analysis included here can be found in the two books by Grafakos \cite{Grafakos,Graf2}.

We note that nowhere in this paper do we attempt to find the optimal constants for our inequalities, and throughout we treat functions defined on the whole of $\R^n$. Similar results for functions on bounded domains are more involved, since one requires carefully tailored extension theorems (see Azzam \& Bedrossian \cite{AB}, for example).

\section{Weak $L^p$ spaces and interpolation}\label{sec:weakLp}

We begin with the definition of the weak $L^p$ spaces and quick proofs of some of their properties. For more details see Chapter 1 of Grafakos \cite{Grafakos}.

For a measurable function $f\:\R^n\to\R$ define the \emph{distribution function of $f$} by
$$
d_f(\alpha)=\mu\{x:\ |f(x)|>\alpha\},
$$
where $\mu(A)$ (or later $|A|$) denotes the Lebesgue measure of a set $A$. It follows using Fubini's Theorem that
\begin{equation}\label{df2Lp}
\|f\|_{L^p}^p=\int_{\R^n}|f(x)|^p\,\d x=p\int_{\R^n}\int_0^{|f(x)|}\alpha^{p-1}\,\d\alpha\,\d x=p\int_0^\infty\alpha^{p-1}d_f(\alpha)\,\d\alpha.
\end{equation}
For $1\le p<\infty$ set
\begin{align*}
\|f\|_{L^{p,\infty}}&=\inf\left\{C:\ d_f(\alpha)\le\frac{C^p}{\alpha^p}\right\}\\
&=\sup\{\gamma d_f(\gamma)^{1/p}:\ \gamma>0\}.
\end{align*}
The space $L^{p,\infty}(\R^n)$ consists of all those $f$ such that $\|f\|_{L^{p,\infty}}<\infty$. It follows immediately from the definition that
\begin{equation}\label{Lpw2df}
f\in L^{p,\infty}(\R^n)\qquad\Rightarrow\qquad
d_f(\alpha)\le \|f\|_{L^{p,\infty}}^p\alpha^{-p}
\end{equation}
and that for any $f$ and $g$
\begin{equation}\label{simple}
d_{f+g}(\alpha)\le d_{f}(\alpha/2)+d_{g}(\alpha/2),
\end{equation}
which implies that
\begin{equation}\label{wktri}
\|f+g\|_{L^{p,\infty}}\le 2(\|f\|_{L^{p,\infty}}+\|g\|_{L^{p,\infty}}).
\end{equation}

The following simple lemma (the proof is essentially that of Chebyshev's inequality) is fundamental and shows that any function in $L^p$ is also in $L^{p,\infty}$.

\begin{lemma}\label{Lp2Lpw}
If $f\in L^p(\R^n)$ then $f\in L^{p,\infty}(\R^n)$ and $\|f\|_{L^{p,\infty}}\le\|f\|_{L^p}$.
\end{lemma}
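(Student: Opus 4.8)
The plan is to run the Chebyshev inequality argument directly from the definition of the distribution function, exactly as the statement hints. Fix $\alpha>0$ and consider the super-level set $E_\alpha:=\{x\:|f(x)|>\alpha\}$. On $E_\alpha$ the integrand $|f(x)|^p$ exceeds $\alpha^p$, so
$$
\alpha^p d_f(\alpha)=\alpha^p\mu(E_\alpha)=\int_{E_\alpha}\alpha^p\,\d x\le\int_{E_\alpha}|f(x)|^p\,\d x\le\int_{\R^n}|f(x)|^p\,\d x=\|f\|_{L^p}^p.
$$
Dividing by $\alpha^p$ gives $d_f(\alpha)\le\|f\|_{L^p}^p\,\alpha^{-p}$ for every $\alpha>0$.

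Next I would simply unwind the definition of the weak norm. The displayed bound says that the constant $C=\|f\|_{L^p}$ belongs to the set $\{C\:d_f(\alpha)\le C^p/\alpha^p\}$ over which the infimum defining $\|f\|_{L^{p,\infty}}$ is taken; hence that infimum is at most $\|f\|_{L^p}$. In particular $\|f\|_{L^{p,\infty}}<\infty$, so $f\in L^{p,\infty}(\R^n)$, and $\|f\|_{L^{p,\infty}}\le\|f\|_{L^p}$, which is the claim. (Alternatively one could read the inequality off from the identity \eqref{df2Lp}, but the direct estimate above is cleaner.)

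There is essentially no obstacle to overcome here: the entire content is the elementary observation that $\int_{\R^n}|f|^p$ dominates the contribution of the super-level set $E_\alpha$, on which the integrand is bounded below by $\alpha^p$. The only thing to be slightly careful about is matching the ``$\inf\{C\:\dots\}$'' form of the definition of $\|\cdot\|_{L^{p,\infty}}$, rather than its ``$\sup\{\gamma d_f(\gamma)^{1/p}\}$'' form; either works, since the two expressions for the weak norm coincide.
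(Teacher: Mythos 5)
Your proof is correct and is essentially identical to the paper's: both run the Chebyshev estimate $\alpha^p d_f(\alpha)\le\int_{\{|f|>\alpha\}}|f|^p\le\|f\|_{L^p}^p$ and then read off the weak-norm bound from the definition. No issues.
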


\begin{proof}
This follows since
\[
  d_f(\alpha)=\int_{\{x:\ |f(x)|>\alpha\}}1\,\d x
  \le\int_{\{x:\ |f(x)|>\alpha\}} \frac{|f(x)|^p}{\alpha^p}\,\d x
  \le\|f\|_{L^p}^p\alpha^{-p}.\qedhere
\]
\end{proof}

While $L^p\subset L^{p,\infty}$, clearly $L^{p,\infty}$ is a larger space than $L^p$: for example,
\begin{equation}\label{x2minus}
|x|^{-n/p}\in L^{p,\infty}(\R^n)
\end{equation}
but this function is not an element of $L^p(\R^n)$.

An immediate indication of why these spaces are useful is given in the following simple result, which shows that in the $L^p$ interpolation inequality
$$
\|f\|_{L^r}\le \|f\|_{L^p}^\theta\|f\|_{L^q}^{1-\theta},\qquad\frac{1}{r}=\frac{\theta}{p}+\frac{1-\theta}{q},
$$
one can replace the Lebesgue spaces on the right-hand side by their weak counterparts.

\begin{lemma}\label{Lpinterp}
Take $1\le p<r<q\le\infty$. If $f\in L^{p,\infty}\cap L^{q,\infty}$ then $f\in L^r$ and
$$
\|f\|_{L^r}\le c_{p,r,q}\|f\|_{L^{p,\infty}}^{\theta}\|f\|_{L^{q,\infty}}^{1-\theta},
$$
where
$$
\frac{1}{r}=\frac{\theta}{p}+\frac{1-\theta}{q}.
$$
If $q=\infty$ we interpret $L^{\infty,\infty}$ as $L^\infty$.
\end{lemma}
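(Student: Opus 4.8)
The plan is to combine the layer-cake identity (\ref{df2Lp}) with the two weak-type decay estimates supplied by (\ref{Lpw2df}). Assume $f\not\equiv0$ (if $\|f\|_{L^{p,\infty}}=0$ then $f=0$ almost everywhere and there is nothing to prove). Write
\[
\|f\|_{L^r}^r=r\int_0^\infty\alpha^{r-1}d_f(\alpha)\,\d\alpha
\]
and split the integral at a threshold $A>0$ to be chosen. On the range $0<\alpha<A$ I would use $d_f(\alpha)\le\|f\|_{L^{p,\infty}}^p\alpha^{-p}$, and on $\alpha>A$ the estimate $d_f(\alpha)\le\|f\|_{L^{q,\infty}}^q\alpha^{-q}$. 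The key point is that the hypothesis $p<r$ makes $\int_0^A\alpha^{r-1-p}\,\d\alpha=A^{r-p}/(r-p)$ finite, while $r<q$ makes $\int_A^\infty\alpha^{r-1-q}\,\d\alpha=A^{r-q}/(q-r)$ finite; these are precisely the points where the ordering $p<r<q$ enters. This yields
\[
\|f\|_{L^r}^r\le\frac{r}{r-p}\|f\|_{L^{p,\infty}}^pA^{r-p}+\frac{r}{q-r}\|f\|_{L^{q,\infty}}^qA^{r-q}.
\]

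The remaining step is to optimise in $A$. Choosing $A=\|f\|_{L^{p,\infty}}^{-p/(q-p)}\|f\|_{L^{q,\infty}}^{q/(q-p)}$ balances the two terms, making each of them equal to a constant times $\|f\|_{L^{p,\infty}}^{p(q-r)/(q-p)}\|f\|_{L^{q,\infty}}^{q(r-p)/(q-p)}$. One then checks that the exponents $p(q-r)/(q-p)$ and $q(r-p)/(q-p)$ are exactly $r\theta$ and $r(1-\theta)$ for the $\theta$ satisfying $1/r=\theta/p+(1-\theta)/q$. Taking $r$-th roots gives the claimed inequality, with (for instance) $c_{p,r,q}=\bigl(r/(r-p)+r/(q-r)\bigr)^{1/r}$.

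Finally, the case $q=\infty$ is handled directly and is in fact easier: then $d_f(\alpha)=0$ for $\alpha>\|f\|_{L^\infty}$, so
\[
\|f\|_{L^r}^r=r\int_0^{\|f\|_{L^\infty}}\alpha^{r-1}d_f(\alpha)\,\d\alpha\le\frac{r}{r-p}\|f\|_{L^{p,\infty}}^p\|f\|_{L^\infty}^{r-p},
\]
which is the desired bound with $\theta=p/r$, consistent with $1/r=\theta/p$ when $1/q=0$. I do not anticipate any genuine obstacle here: the only mild care needed is verifying convergence of the two improper integrals — exactly what $p<r<q$ guarantees — and bookkeeping the exponents during the optimisation over $A$.
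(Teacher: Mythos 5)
Your proof is correct and follows essentially the same route as the paper: split the layer-cake integral $r\int_0^\infty\alpha^{r-1}d_f(\alpha)\,\d\alpha$ at a threshold, use the $L^{p,\infty}$ decay on the lower range and the $L^{q,\infty}$ decay on the upper range, and optimise the threshold (your choice of $A$ coincides with the paper's choice of $x$). The exponent bookkeeping and the simpler $q=\infty$ case are handled exactly as intended, so nothing further is needed.
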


\begin{proof}
We give the proof when $q<\infty$; the proof when $q=\infty$ is slightly simpler. If $f\in L^{p,\infty}$ then $d_f(\alpha)\le\|f\|_{L^{p,\infty}}^p\alpha^{-p}$, so for any $x$ we have
\begin{align*}
\|f\|_{L^r}^r&=r\int_0^\infty\alpha^{r-1}d_f(\alpha)\,\d\alpha\\
&\le r\int_0^x\alpha^{r-1}\|f\|_{L^{p,\infty}}^p\alpha^{-p}\,\d\alpha+r\int_x^\infty\alpha^{r-1}\|f\|_{L^{q,\infty}}^q\alpha^{-q}\,\d\alpha\\
&\le\frac{r}{r-p}\|f\|_{L^{p,\infty}}^px^{r-p}+\frac{r}{r-q}\|f\|_{L^{q,\infty}}^qx^{q-r}.
\end{align*}
Now choose
$$
x^{p-q}=\frac{\|f\|_{L^{p,\infty}}^p}{\|f\|_{L^{q,\infty}}^q}
$$
to equalise the dependence of the two terms on the right-hand side on the weak norms.\end{proof}

\section{The Fourier transform}\label{sec:FT}

The Schwartz space $\S$ of rapidly decreasing test functions consists of all $\phi\in C^\infty(\R^n)$ such that
$$
\sup_{x\in\R^n}|x^\beta\partial^\alpha\phi|\le M_{\alpha,\beta}\qquad\mbox{for all}\qquad\alpha,\beta\ge0,
$$
where $\alpha,\beta$ are multi-indices.

For any $f\in\S$ one can define the Fourier transform\footnote{There are various possible definitions of the Fourier transform. For example, one could omit the factor of $2\pi$ from the exponential and include a multiplicative factor of $(2\pi)^{-n/2}$ in front of the integral; in this case one keeps the Fourier inversion formula unchanged. However, the fact that the function $\e^{-\pi|x|^2}$ has norm one and is unaffected by the Fourier transform as defined in (\ref{fourierint}) is useful; one can use this to prove the Fourier inversion formula, see Theorem 2.2.14 in Grafakos \cite{Grafakos}, for example.}
\begin{equation}\label{fourierint}
{\mathscr F}[f](\xi)=\hat f(\xi)=\int_{\R^n}\e^{-2\pi\ri\xi\cdot x}f(x)\,\d x.
\end{equation}
It is straightforward to check that
$$
{\mathscr F}[\partial^\alpha f](\xi)=(2\pi\ri)^{|\alpha|}\xi^\alpha\hat f(\xi)\qquad\mbox{and}\qquad{\mathscr F}[x^\beta f](\xi)=(-2\pi\ri)^{|\beta|}[\partial^\beta\hat f](\xi),
$$
from which it follows that $\mathscr F$ maps $\S$ into itself.

Given the Fourier transform of $f$, one can reconstruct $f$ by essentially applying the Fourier transform operator once more:
\be{finversion}
f(x)=\int_{\R^n}\e^{2\pi\ri\xi\cdot x}\hat f(\xi)\,\d\xi.
\ee
If we define $\sigma(f)$ by $\sigma(f)(x)=f(-x)$ then we can write the inversion formula more compactly as $f={\sigma\circ\mathscr F}(\hat f)$. We define ${\mathscr F}^{-1}=\sigma\circ\mathscr F$, the point being that when we can meaningfully extend the definition of $\mathscr F$ and $\sigma$ we will retain this inversion formula.

An obvious extension of the Fourier transform is to any function $f\in L^1(\R^n)$, using the integral definition in (\ref{fourierint}) directly. Since
$$
|\hat f(\xi)|\le\int_{\R^n}|f(x)|\,\d x=\|f\|_{L^1}
$$
it follows that $\mathscr F$ maps $L^1$ into $L^\infty$. Furthermore, there is a natural definition of the Fourier transform for $f\in L^2(\R^n)$. Given $f\in\S$,
\begin{align*}
\|\hat f\|_{L^2}&=\int_{\R^n}\overline{\hat f(x)}\left(\int_{\R^n}\e^{-2\pi\ri\xi\cdot x}f(\xi)\,\d \xi\right)\,\d x\\
&=\int_{\R^n}f(\xi)\left(\int_{\R^n}\overline{\hat f(x)\e^{2\pi\ri\xi\cdot x}}\,\d x\right)\,\d\xi\\
&=\int\overline{f(\xi)}f(\xi)\,\d\xi=\|f\|_{L^2}^2.
\end{align*}
Now given any $f\in L^2$, one can write $f=\lim_{n\to\infty}f_n$, where $f_n\in\S$ and the limit is taken in $L^2$. It follows that $\hat f_n$ is Cauchy in $L^2$, and we identify its limit as $\hat f$. So we can define $\mathscr F\:L^2\to L^2$, with $\|\hat f\|_{L^2}=\|f\|_{L^2}$.

The Fourier transform can therefore be defined (by linearity) for any $f\in L^1+L^2$; $f$ can be recovered from $\hat f$ using ${\mathscr F}^{-1}$ if $\hat f\in L^1+L^2$, and if $\hat f\in L^1$ (in particular if $\hat f\in\S$) then we can use the integral form of the Fourier inversion formula (\ref{finversion}) to give $f$ pointwise as an integral involving $\hat f$.

Given this, we can in fact define the Fourier transform if $f\in L^{r,\infty}$ for some $1<r<2$ (and in particular if $f\in L^r$), by splitting $f$ into two parts, one in $L^1$ and one in $L^2$. The following lemma gives a more general version of this, which will be useful later. We use $\chi_P$ to denote the characteristic function of the set $\{x:\ P\mbox{ holds}\}$.

\begin{lemma}\label{splitg}
  Take $1\le t<r<s\le\infty$, and suppose that $g\in L^{r,\infty}$. For any $M>0$ set
  $$
  g_{M-}=g\chi_{|g|\le M}\qquad\mbox{and}\qquad g_{M+}=g\chi_{|g|>M}.
  $$
  Then $g=g_{M-}+g_{M+}$, where $g_{M-}\in L^s$ with
  \be{g-Ls}
  \|g_{M-}\|_{L^s}^s\le\frac{s}{s-r}M^{s-r}\|g\|_{L^{r,\infty}}^r-M^sd_g(M)
  \ee
  if $s<\infty$ and $\|g_{M-}\|_{L^\infty}\le M$, and $g_{M+}\in L^t$ with
  \be{g+Lt}
  \|g_{M+}\|_{L^t}^t\le\frac{r}{r-t}M^{t-r}\|g\|_{L^{r,\infty}}^r.
  \ee
\end{lemma}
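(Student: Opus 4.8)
The decomposition $g=g_{M-}+g_{M+}$ is immediate since $\chi_{|g|\le M}+\chi_{|g|>M}\equiv1$, and $\|g_{M-}\|_{L^\infty}\le M$ holds by the very definition of $g_{M-}$; so the real content is the two quantitative bounds (\ref{g-Ls}) and (\ref{g+Lt}). My plan is to derive both from the layer-cake identity (\ref{df2Lp}), once the distribution functions of $g_{M-}$ and $g_{M+}$ have been written explicitly in terms of $d_g$.

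First I would record the elementary identities for the truncated distribution functions. Since $|g_{M-}|\le M$ pointwise, $d_{g_{M-}}(\alpha)=0$ for $\alpha\ge M$, while for $0<\alpha<M$ one has $\{|g_{M-}|>\alpha\}=\{\alpha<|g|\le M\}$, whose measure is $d_g(\alpha)-d_g(M)$. Dually, $\{|g_{M+}|>\alpha\}=\{|g|>M\}$ for $0<\alpha<M$ and $\{|g_{M+}|>\alpha\}=\{|g|>\alpha\}$ for $\alpha\ge M$, so $d_{g_{M+}}(\alpha)=d_g(M)$ on $(0,M)$ and $d_{g_{M+}}(\alpha)=d_g(\alpha)$ on $[M,\infty)$. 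Each of these follows by simply unwinding which of the two branches of the truncation can exceed the level $\alpha$.

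For $g_{M-}$, substituting into (\ref{df2Lp}) gives
\[
\|g_{M-}\|_{L^s}^s=s\int_0^M\alpha^{s-1}\bigl(d_g(\alpha)-d_g(M)\bigr)\,\d\alpha ;
\]
the subtracted part integrates exactly to $M^sd_g(M)$, and bounding $d_g(\alpha)\le\|g\|_{L^{r,\infty}}^r\alpha^{-r}$ via (\ref{Lpw2df}) in the remaining integral — where $s>r$ guarantees $\int_0^M\alpha^{s-1-r}\,\d\alpha=M^{s-r}/(s-r)$ — yields (\ref{g-Ls}). For $g_{M+}$ the same substitution gives
\[
\|g_{M+}\|_{L^t}^t=M^td_g(M)+t\int_M^\infty\alpha^{t-1}d_g(\alpha)\,\d\alpha ,
\]
and applying (\ref{Lpw2df}) to both terms — now using $t<r$ so that the tail integral $\int_M^\infty\alpha^{t-1-r}\,\d\alpha=M^{t-r}/(r-t)$ converges — and collecting the constants, $1+t/(r-t)=r/(r-t)$, produces (\ref{g+Lt}).

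There is no serious obstacle here: the only point that needs care is the bookkeeping of the distribution functions of the truncated pieces, in particular keeping track of the "missing mass" term $d_g(M)$ that is responsible for the precise form of the constants, and noting that the hypotheses $t<r$ and $r<s$ are exactly what make the head and tail integrals finite and give the stated coefficients. Everything else is direct substitution into (\ref{df2Lp}).
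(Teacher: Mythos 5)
Your argument is correct and is exactly the paper's proof: you compute the distribution functions of $g_{M-}$ and $g_{M+}$ as in (\ref{dg-}) and (\ref{dg+}), substitute into the layer-cake identity (\ref{df2Lp}), and bound $d_g$ via (\ref{Lpw2df}), which is precisely the route the paper indicates (and you supply the routine integrations it leaves to the reader, including the $M^sd_g(M)$ term and the constant $1+t/(r-t)=r/(r-t)$). No gaps.
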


\begin{proof}
Simply note that
\begin{equation}\label{dg-}
d_{g_{M-}}(\alpha)=\begin{cases}
0&\alpha\ge M\\ d_g(\alpha)-d_g(M)&\alpha<M
\end{cases}
\end{equation}
and
\begin{equation}\label{dg+}
d_{g_{M+}}(\alpha)=\begin{cases}
d_g(\alpha)&\alpha>M\\
d_g(M)&\alpha\le M.
\end{cases}
\end{equation}
Then using (\ref{df2Lp}), (\ref{dg-}), and (\ref{Lpw2df}) it is simple to show (\ref{g-Ls}), and (\ref{g+Lt}) follows similarly, using (\ref{dg+}) in place of (\ref{dg-}).
\end{proof}

It is natural to ask what one can say about $\hat f$ when $f\in L^p$. We will see in Section \ref{sec:Marcin} that $\hat f\in L^q$ with $(p,q)$ conjugate, provided that $1\le p\le 2$ (Corollary \ref{cor:FT}).  Note, however, that for any $p>2$ one can find a function in $L^p$ whose Fourier transform is not even a locally integrable function (see Exercise 2.3.13 in Grafakos \cite{Grafakos}).

One can extend the definition further to the space of tempered distributions $\S'$. We say that a sequence $\{\phi_n\}\in\S$ converges to $\phi\in\S$ if
$$
\sup_{x\in\R^n}|x^\alpha\partial^\beta(\phi_n-\phi)|\to0\qquad\mbox{for all}\qquad\alpha,\beta\ge0,
$$
and a linear functional $F$ on $\S$ is an element of $\S'$ if $\<F,\phi_n\>\to \<F,\phi\>$ whenever $\phi_n\to\phi$ in $\S$. It is easy to show that for any $\phi,\psi\in\S$
$$
\<\phi,\hat\psi\>=\<\hat\phi,\psi\>,
$$
and this\footnote{We use $\<\cdot,\cdot\>$ for the action of an element of $\S'$ on elements of $\S$, and set $\<f,g\>=\int fg$ when $f$ and $g$ are functions.} allows us to define the Fourier transform for $F\in\S'$ by setting
$$
\<\hat F,\psi\>=\<F,\hat\psi\>\qquad\mbox{for every}\quad\psi\in\S.
$$
Since one can also extend the definition of $\sigma$ to $\S'$  via the definition $\<\sigma(F),\psi\>=\<F,\sigma(\psi)\>$, the identity $F={\mathscr F}^{-1}\hat F$ still holds in this generality.

\section{Convolution and Young's inequality}\label{sec:Young}

Expressions given by convolutions, i.e.
$$
[f\star g](x)=\int_{\R^n}f(y)g(x-y)\,\d y,
$$
occur frequently. It is a fundamental result that $[f\star g]\hat{\ }(\xi)=\hat f(\xi)\hat g(\xi)$; for $f,g\in\S$ this is the result of simple calculation, which can be extended to $f\in\S$, $g\in\S'$ via the definition $\<f\star g,\phi\>=\<g,\sigma(f)\star\phi\>$.

 One of the primary results for convolutions is Young's inequality.  Following Grafakos (Theorem 1.2.12 in \cite{Grafakos}) we give an elementary proof that
uses only H\"older's inequality.

\begin{lemma}[Young's inequality]\label{sYoung}
Let $1\le p,q,r\le\infty$ satisfy
$$
\frac{1}{p}+1=\frac{1}{q}+\frac{1}{r}.
$$
Then for all $f\in L^q$, $g\in L^r$, we have $f\star g\in L^p$ with
\be{standardYoung}
\|f\star g\|_{L^p}\le\|f\|_{L^q}\|g\|_{L^r}.
\ee
\end{lemma}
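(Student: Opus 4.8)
The plan is to follow the elementary route (as in Grafakos, Theorem 1.2.12): split the integrand $|f(y)\,g(x-y)|$ into three factors tailored to a three-exponent H\"older inequality, bound the convolution pointwise, and then integrate in $x$ using Tonelli's theorem. Every step uses only H\"older's inequality and a change in the order of integration, so no Fourier analysis or interpolation is needed. Since $f\star g$ is not a priori defined, I would run the whole argument for $|f|,|g|\ge0$ and recover the general case at the end.

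First I would dispose of the endpoint $p=\infty$. Here the constraint $1/p+1=1/q+1/r$ reduces to $1/q+1/r=1$, so $q$ and $r$ are conjugate, and for every $x$ H\"older's inequality together with translation invariance of Lebesgue measure gives
\[
  |[f\star g](x)|\le\int_{\R^n}|f(y)|\,|g(x-y)|\,\d y\le\|f\|_{L^q}\,\|g(x-\cdot)\|_{L^r}=\|f\|_{L^q}\|g\|_{L^r}.
\]
So from now on assume $1\le p<\infty$; note that $1/q=1/p+1-1/r\ge1/p$ since $1/r\le1$, hence $q\le p$, and symmetrically $r\le p$. For fixed $x$ I would write
\[
  |f(y)|\,|g(x-y)|=\bigl(|f(y)|^{q}|g(x-y)|^{r}\bigr)^{1/p}\cdot|f(y)|^{1-q/p}\cdot|g(x-y)|^{1-r/p}
\]
and apply H\"older's inequality in $y$ with the three exponents $p$, $\,pq/(p-q)$, $\,pr/(p-r)$ (with the convention that a factor raised to the power $0$ is the constant $1$, paired with the exponent $\infty$). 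Their reciprocals sum to $1/p+(1/q-1/p)+(1/r-1/p)=1/q+1/r-1/p=1$ by hypothesis, so H\"older applies and yields
\[
  \int_{\R^n}|f(y)||g(x-y)|\,\d y\le\Bigl(\int_{\R^n}|f(y)|^{q}|g(x-y)|^{r}\,\d y\Bigr)^{1/p}\|f\|_{L^q}^{1-q/p}\|g\|_{L^r}^{1-r/p}.
\]

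Then I would raise this to the power $p$, integrate in $x\in\R^n$, and pull out the constants; Tonelli's theorem plus translation invariance give $\int_{\R^n}\!\int_{\R^n}|f(y)|^{q}|g(x-y)|^{r}\,\d x\,\d y=\|f\|_{L^q}^{q}\|g\|_{L^r}^{r}$, so that
\[
  \bigl\|\,|f|\star|g|\,\bigr\|_{L^p}^p\le\|f\|_{L^q}^{p-q}\|g\|_{L^r}^{p-r}\cdot\|f\|_{L^q}^{q}\|g\|_{L^r}^{r}=\|f\|_{L^q}^p\|g\|_{L^r}^p.
\]
In particular $[|f|\star|g|](x)<\infty$ for a.e.\ $x$, so $[f\star g](x)$ is defined for a.e.\ $x$ with $|[f\star g](x)|\le[|f|\star|g|](x)$, and taking $p$-th roots gives (\ref{standardYoung}).

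The one step requiring care is the choice of the three exponents and checking that the argument survives their degenerate values: $q=p$ (equivalently $r=1$) and $r=p$ (equivalently $q=1$), in which one of the three factors is the constant $1$ with $L^\infty$ norm $1$ while the remaining factor's H\"older norm collapses to a power of $\|f\|_{L^1}$ or $\|g\|_{L^1}$, so the displayed inequalities still hold verbatim. A second, purely routine point is the joint measurability of $(x,y)\mapsto f(y)g(x-y)$ on $\R^n\times\R^n$, which is what licenses the use of Tonelli; this is standard and I would not dwell on it.
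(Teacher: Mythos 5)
Your proof is correct and is essentially the same argument as the paper's (and Grafakos, Theorem 1.2.12): the three-factor splitting with exponents $p$, $pq/(p-q)$, $pr/(p-r)$ is exactly the paper's splitting with exponents $p$, $r'$, $q'$, since $1/r' = 1/q - 1/p$ and $1/q' = 1/r - 1/p$, followed by the same Tonelli/translation-invariance computation. Your explicit treatment of the endpoint $p=\infty$ and of the degenerate cases $q=1$ or $r=1$ is a small additional care the paper leaves implicit.
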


\begin{proof} We use $p'$ to denote the conjugate of $p$. Then we have
  $$
  \frac{1}{r'}+\frac{1}{p}+\frac{1}{q'}=1,\qquad\frac{q}{p}+\frac{q}{r'}=1,\quad\mbox{and}\quad \frac{r}{p}+\frac{r}{q'}=1.
  $$
  First use H\"older's inequality with exponents $r'$, $p$, and $q'$:
  \begin{align*}
  |(f\star g)(x)|&\le\int|f(y)||g(x-y)|\,\d y\\
  &=\int|f(y)|^{q/r'}\left(|f(y)|^{q/p}|g(x-y)|^{r/p}\right)|g(x-y)|^{r/q'}\,\d y\\
  &\le\|f\|_{L^q}^{q/r'}\left(\int|f(y)|^q|g(x-y)|^r\,\d y\right)^{1/p}\left(\int|g(x-y)|^r\,\d y\right)^{1/q'}\\
  &\le\|f\|_{L^q}^{q/r'}\left(\int|f(y)|^q|g(x-y)|^r\,\d y\right)^{1/p}\|g\|_{L^r}^{r/q'}.
  \end{align*}
Now take the $L^p$ norm (with respect to $x$):
\begin{align*}
\|f\star g\|_{L^p}&\le\|f\|_{L^q}^{q/r'}\|g\|_{L^r}^{r/q'}\left(\iint|f(y)|^q|g(x-y)|^r\,\d y\,\d x\right)^{1/p}\nonumber\\
&=\|f\|_{L^q}^{q/r'}\|g\|_{L^r}^{r/q'}\|f\|_{L^q}^{q/p}\|g\|_{L^r}^{r/p}\nonumber\\
&=\|f\|_{L^q}\|g\|_{L^r}.\qedhere
\end{align*}
\end{proof}

We will need a version of this inequality that allows $L^q$ on the right-hand side to be replaced by $L^{q,\infty}$. The price we have to pay for this (at least initially) is that we also weaken the left-hand side; and note that we have also lost the possibility of some endpoint values ($r=\infty$ and $p,q=1,\infty$) that are allowed in (\ref{standardYoung}). In fact one can keep the full $L^p$ norm on the left, provided that $r>1$; but this requires Proposition \ref{prop:wYoung} as an intermediate step and the Marcinkiewicz Interpolation Theorem (see Section \ref{sec:Marcin}).

\begin{proposition}\label{prop:wYoung}
Suppose that $1\le r<\infty$ and $1<p,q<\infty$. If $f\in L^{q,\infty}$ and $g\in L^r$ with
$$
\frac{1}{p}+1=\frac{1}{q}+\frac{1}{r}
$$
then $f\star g\in L^{p,\infty}$ with
\begin{equation}\label{w2wYoung}
\|f\star g\|_{L^{p,\infty}}\le c_{p,q,r}\|f\|_{L^{q,\infty}}\|g\|_{L^r}.
\end{equation}
\end{proposition}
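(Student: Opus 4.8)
The plan is to use the classical device of splitting the weak-$L^{q}$ function at a height $M$ (as in the proof of the Marcinkiewicz theorem), and then to choose $M$ as a function of the level $\alpha$ so that one of the two pieces of $f\star g$ contributes nothing to $d_{f\star g}(\alpha)$, while the other is controlled by Young's inequality in the form already available (Lemma~\ref{sYoung}). So, fixing $\alpha>0$ and a cutoff $M>0$ to be chosen, I would write $f=f_{M-}+f_{M+}$ as in Lemma~\ref{splitg}, note $f\star g=f_{M-}\star g+f_{M+}\star g$, and use (\ref{simple}) to get
\[
d_{f\star g}(\alpha)\le d_{f_{M-}\star g}(\alpha/2)+d_{f_{M+}\star g}(\alpha/2).
\]

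For the ``low'' piece, the key preliminary observation is that the exponent relation $1/p+1=1/q+1/r$ together with $p<\infty$ forces $1/q+1/r>1$, i.e.\ $q<r'$; hence Lemma~\ref{splitg} applies with its parameter ``$s$'' taken equal to $r'$ and yields $f_{M-}\in L^{r'}$ with $\|f_{M-}\|_{L^{r'}}\le c\,M^{1-q/r'}\|f\|_{L^{q,\infty}}^{q/r'}$ (when $r=1$ one uses instead $\|f_{M-}\|_{L^{\infty}}\le M$). Young's inequality with exponents $(\infty,r',r)$ then gives
\[
\|f_{M-}\star g\|_{L^{\infty}}\le\|f_{M-}\|_{L^{r'}}\|g\|_{L^{r}}\le c\,M^{1-q/r'}\|f\|_{L^{q,\infty}}^{q/r'}\|g\|_{L^{r}}.
\]
Since $1-q/r'=q/p>0$, I can choose $M=M(\alpha)$ so that the right-hand side equals $\alpha/2$; for that choice $|f_{M-}\star g|\le\alpha/2$ a.e., so $d_{f_{M-}\star g}(\alpha/2)=0$.

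For the ``high'' piece I would apply Lemma~\ref{splitg} with its parameter ``$t$'' equal to $1$ (legitimate since $q>1$), obtaining $f_{M+}\in L^{1}$ with $\|f_{M+}\|_{L^{1}}\le\tfrac{q}{q-1}M^{1-q}\|f\|_{L^{q,\infty}}^{q}$, and then Young's inequality with exponents $(r,1,r)$ to get $f_{M+}\star g\in L^{r}$ with $\|f_{M+}\star g\|_{L^{r}}\le\|f_{M+}\|_{L^{1}}\|g\|_{L^{r}}$. The Chebyshev-type bound used in Lemma~\ref{Lp2Lpw} gives $d_{f_{M+}\star g}(\alpha/2)\le(2/\alpha)^{r}\|f_{M+}\star g\|_{L^{r}}^{r}$; substituting the value of $M$ fixed above and collecting powers then leaves $d_{f\star g}(\alpha)\le c\,\alpha^{-p}\|f\|_{L^{q,\infty}}^{p}\|g\|_{L^{r}}^{p}$, and taking the supremum over $\alpha>0$ yields (\ref{w2wYoung}).

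The step I expect to be the main obstacle is the final bookkeeping, which is also where the hypothesis on the exponents is really used: after substituting $M\propto\alpha^{p/q}\|f\|_{L^{q,\infty}}^{-p/r'}\|g\|_{L^{r}}^{-p/q}$, the power of $\alpha$ coming out of $d_{f_{M+}\star g}(\alpha/2)$ is $-r\big(q+p(q-1)\big)/q$, and a short computation shows this equals $-p$ \emph{precisely} when $1/p+1=1/q+1/r$; the companion powers of $\|f\|_{L^{q,\infty}}$ and $\|g\|_{L^{r}}$ can be checked to be $p$ each, or simply deduced from the fact that every ingredient of the construction is homogeneous of degree one in $f$ and in $g$ separately. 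One must also keep track of the constants produced by Lemma~\ref{splitg}: the factor $\tfrac{q}{q-1}$ blows up as $q\to1$ and the factor $\big(\tfrac{r'}{r'-q}\big)^{1/r'}$ blows up as $q\to r'$ (equivalently, as $p\to\infty$), which matches the restrictions $1<q<\infty$ and $p<\infty$ and is the reason $c_{p,q,r}$ genuinely depends on all three exponents.
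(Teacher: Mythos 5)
Your proof is correct and follows essentially the same route as the paper's: split $f$ at height $M$, annihilate the low piece in $L^\infty$ by choosing $M=M(\alpha)$, and control the high piece via $L^1\star L^r\subset L^r$ together with a Chebyshev bound, with the exponent relation $1/p+1=1/q+1/r$ entering exactly where you say it does. If anything, your bookkeeping is tidier than the paper's own write-up, which follows Grafakos's labelling and so swaps the roles of $q$ and $r$ relative to the statement of Proposition~\ref{prop:wYoung}.
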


\begin{proof}
We follow the proof in Grafakos \cite{Grafakos}, skipping some of the algebra. We have already introduced the main step, the splitting of $f$ in Lemma \ref{splitg}. For a fixed $M>0$ we set $f=f_{M-}+f_{M+}$. Using (\ref{g-Ls}) and H\"older's inequality we obtain
$$
|(f_{M-}\star g)(x)|\le\|f_{M-}\|_{L^{q'}}\|g\|_{L^q}\le \left(\frac{q'}{q'-r}M^{q'-r}\|f\|_{L^{r,\infty}}^r\right)^{1/q'}\|g\|_{L^q},
$$
where $(q,q')$ are conjugate; the right-hand side reduces to $M\|g\|_{L^1}$ if $q=1$. Note in particular that if
$$
M=(\alpha^{q'}2^{-q'}rp^{-1}\|f\|_{L^{r,\infty}}^{-r}\|g\|_{L^q}^{-q'})^{1/(q'-r)}
$$
(or $\alpha/2\|g\|_{L^1}$ if $q=1$) then $d_{f_{M-}\star g}(\alpha/2)=0$.

For $f_{M+}$ we can use (\ref{g+Lt}) and apply Young's inequality to yield
$$
\|f_{M+}\star g\|_{L^q}\le\|f_{M+}\|_{L^1}\|g\|_{L^q}\le\frac{r}{r-1}M^{1-r}\|f\|_{L^{r,\infty}}^r\|g\|_{L^q}.
$$

Choosing $M$ as above and using (\ref{simple}) it follows that
\begin{align*}
d_{f\star g}(\alpha)&\le d_{f_{M+}\star g}(\alpha/2)\\
&\le(2\|f_{M+}\star g\|_{L^p}\alpha^{-1})^q\\
&\le(2rM^{1-r}\|f\|_{L^{r,\infty}}^r\|g\|_{L^q}(r-1)^{-1}\alpha^{-1})^q\\
&=C\|f\|_{L^{r,\infty}}^p\|g\|_{L^q}^p\alpha^{-p},
\end{align*}
which yields (\ref{w2wYoung}).\end{proof}

This result has implications, among other things, for the regularity of solutions of elliptic equations. It was mentioned in the introduction that our study of generalised Gagliardo--Nirenberg inequalities was motivated by the study of a particular coupled system in two dimensions, namely
\begin{align*}
-\Delta u+\nabla p&=(B\cdot\nabla)B,\qquad\nabla\cdot u=0,\\
\frac{\d B}{\d t}+\eta\Delta B+(u\cdot\nabla)B&=(B\cdot\nabla)u,\qquad\nabla\cdot B=0.
\end{align*}
Formal energy estimates (which can be made rigorous via a suitable regularisation) yield
$$
\frac{1}{2}\|B(t)\|_{L^2}^2+\eta\int_0^t\|\nabla B\|_{L^2}^2+\int_0^t\|\nabla u\|_{L^2}^2\le \frac{1}{2}\|B(0)\|_{L^2}^2,
$$
showing in particular that $B\in L^\infty(0,T;L^2)$ when $B(0)\in L^2$. To obtain a similar uniform estimate on $u$ we need to understand the regularity of solutions of the Stokes problem
$$
-\Delta u+\nabla p=(B\cdot\nabla)B\qquad\nabla\cdot u=0
$$
when $B\in L^2$. A slightly simpler problem with the same features is
\be{RHSdiv}
-\Delta \phi=\partial_if,
\ee
with $f\in L^1$. It is well known that the solution of $-\Delta\phi=g$ in $\R^2$ is given by $E\star g$, where
$$
E(x)=-\frac{1}{2\pi}\log|x|.
$$
Noting (after an integration by parts) that the solution of (\ref{RHSdiv}) is given by $\partial_i E\star f$, and that $\partial_i E\in L^{2,\infty}$, it follows from Proposition \ref{prop:wYoung} that $f\in L^1$ implies that $\phi\in L^{2,\infty}$. [The stronger version of Young's inequality given in Theorem \ref{bestYoung} does not apply when $f\in L^1$, so would not improve the regularity here.] Thus to obtain further estimates (in particular on the time derivative of $B$) we required a version of the Ladyzhenskaya inequality that replaced the $L^2$ norm of $u$ with the norm of $u$ in $L^{2,\infty}$. Further details can be found in McCormick et al.\ \cite{MRR}.

\section{Endpoint Sobolev embedding}\label{sec:EPSE}

In our proof of the inequality
$$
\|f\|_{L^p}\le c\|f\|_{L^{q,\infty}}^\alpha\|f\|_{\dot H^s}^{1-\alpha}
$$
we will use the endpoint Sobolev embedding $\dot H^s(\R^n)\subset L^p(\R^n)$ for $s=n(1/2-1/p)$ when $2<p<\infty$. We prove this here, following Theorem 1.2 in Chemin et al.\ \cite{CDG*}.

Since the Fourier transform maps $L^2$ isometrically into itself, and
$$
{\mathscr F}[\partial^\alpha f](\xi)=(2\pi\ri)^{|\alpha|}\xi^\alpha\hat f(\xi),
$$
it is relatively straightforward to show that when $s$ is a non-negative integer
\begin{equation}\label{SobnormFT}
\sum_{|\alpha|=s}\|\partial^\alpha f\|_{L^2}^2\ \simeq\ \int_{\R^n}|\xi|^{2s}|\hat f(\xi)|^2\,\d\xi,
\end{equation}
where we write $a\simeq b$ if there are constants $0<c\le C$ such that $ca\le b\le Ca$.

For any $s\ge0$, even if $s$ is not an integer, we can define\footnote{We follow the definition of Bahouri et al.\ \cite{C**} (see also Chemin et al.\ \cite{CDG*}), including the condition that $\hat f\in L^1_{\rm loc}(\R^n)$. This sidesteps complexities that arise from problems with understanding the meaning of $|\xi|^s\hat f$ if one only knows that $\hat f\in\S'$; see the discussion in Chapter 6 of Grafakos \cite{Graf2}.} the homogeneous Sobolev space $\dot H^s(\R^n)$ using (\ref{SobnormFT}):
$$
\dot H^s(\R^n)=\left\{f\in\S':\ \hat f\in L^1_{\rm loc}(\R^n)\mbox{ and }\int_{\R^n}|\xi|^{2s}|\hat f(\xi)|^2\,\d\xi<\infty\right\}.
$$
For $s<n/2$ this is a Hilbert space with the natural norm
$$
\|f\|_{\dot H^s}=\left(\int_{\R^n}|\xi|^{2s}|\hat f(\xi)|^2\,\d\xi\right)^{1/2},
$$
and one can therefore also define $\dot H^s(\R^n)$ in this case as the completion of $\S$ with respect to the $\dot H^s$ norm (that $\dot H^s(\R^n)$ is complete iff $s<n/2$ is shown in Bahouri et al.\ \cite{C**}; the simple example showing that $\dot H^s(\R^n)$ is not complete when $s\ge n/2$ can also be found in Chemin et al.\ \cite{CDG*}).

\begin{theorem}\label{EPS}
For $2<p<\infty$ there exists a constant $c=c_{n,p}$ such that if $f\in\dot H^s(\R^n)$ with $s=n(1/2-1/p)$ then $f\in L^p(\R^n)$ and
\begin{equation}\label{endpt}
\|f\|_{L^p}\le c\|f\|_{\dot H^s}.
\end{equation}
\end{theorem}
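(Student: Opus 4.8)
The plan is to prove the endpoint Sobolev embedding $\dot H^s(\R^n)\subset L^p(\R^n)$, $s=n(1/2-1/p)$, $2<p<\infty$, by a Fourier-side frequency decomposition combined with the interpolation lemma for weak spaces (Lemma~\ref{Lpinterp}). The key observation is that the $L^p$ norm of $f$ can be controlled by showing $f$ lies in $L^{p,\infty}$ and then bootstrapping, but in fact it is cleaner to estimate the distribution function $d_f(\alpha)$ directly. First I would split $f$ at frequency $R>0$: write $\hat f=\hat f\chi_{|\xi|\le R}+\hat f\chi_{|\xi|>R}=\widehat{f_{\rm low}}+\widehat{f_{\rm high}}$, so that $f=f_{\rm low}+f_{\rm high}$. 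The low-frequency part is handled via the Fourier inversion formula and the Cauchy--Schwarz inequality: since $\hat f\in L^1_{\rm loc}$, one has the pointwise bound
$$
\|f_{\rm low}\|_{L^\infty}\le\int_{|\xi|\le R}|\hat f(\xi)|\,\d\xi\le\left(\int_{|\xi|\le R}|\xi|^{-2s}\,\d\xi\right)^{1/2}\|f\|_{\dot H^s}=c\,R^{n/2-s}\|f\|_{\dot H^s},
$$
where the integral $\int_{|\xi|\le R}|\xi|^{-2s}\,\d\xi$ converges precisely because $2s<n$ (equivalently $p<\infty$), yielding a power $R^{n/2-s}=R^{n/p}$.

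For the high-frequency part I would use the Plancherel identity: $\|f_{\rm high}\|_{L^2}^2=\int_{|\xi|>R}|\hat f(\xi)|^2\,\d\xi\le R^{-2s}\int_{|\xi|>R}|\xi|^{2s}|\hat f(\xi)|^2\,\d\xi\le R^{-2s}\|f\|_{\dot H^s}^2$, so that $\|f_{\rm high}\|_{L^2}\le R^{-s}\|f\|_{\dot H^s}$. Now I would estimate the distribution function of $f$ at level $\alpha$ by choosing $R=R(\alpha)$ so that the $L^\infty$ bound on $f_{\rm low}$ is $\alpha/2$; then $d_f(\alpha)\le d_{f_{\rm low}}(\alpha/2)+d_{f_{\rm high}}(\alpha/2)=d_{f_{\rm high}}(\alpha/2)$, and by Chebyshev (Lemma~\ref{Lp2Lpw}) this is at most $(2/\alpha)^2\|f_{\rm high}\|_{L^2}^2\le 4\alpha^{-2}R^{-2s}\|f\|_{\dot H^s}^2$. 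Substituting the value of $R$ (which scales like $(\alpha/\|f\|_{\dot H^s})^{p/n}$, up to constants) gives $d_f(\alpha)\le c\,\|f\|_{\dot H^s}^p\,\alpha^{-p}$, i.e. $f\in L^{p,\infty}$ with $\|f\|_{L^{p,\infty}}\le c\|f\|_{\dot H^s}$. Finally, to upgrade from weak $L^p$ to $L^p$, I would redo the split keeping track of both pieces rather than killing $f_{\rm low}$: estimate $d_f(\alpha)\le d_{f_{\rm low}}(\alpha/2)+d_{f_{\rm high}}(\alpha/2)$ with $R$ now chosen $\alpha$-independently (or optimised differently), feed this into $\|f\|_{L^p}^p=p\int_0^\infty\alpha^{p-1}d_f(\alpha)\,\d\alpha$, and split the $\alpha$-integral at a threshold, choosing $R=R(\alpha)$ inside the integral so that each of the two resulting integrals converges. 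Concretely, picking $R$ proportional to $\alpha^{p/n}$ makes the low-frequency $L^\infty$ contribution vanish for $\alpha$ above a curve and the high-frequency $L^2$ contribution decay, so the integral $\int_0^\infty\alpha^{p-3}R(\alpha)^{-2s}\,\d\alpha$ converges.

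The main obstacle I anticipate is organising the final step so that the $\alpha$-integral genuinely converges at both endpoints rather than merely recovering the weak estimate: one must verify that with $R=R(\alpha)\sim(\alpha/\|f\|_{\dot H^s})^{p/n}$ the exponent of $\alpha$ in $\alpha^{p-1}\cdot\alpha^{-2}\cdot R(\alpha)^{-2s}$ works out — here $R^{-2s}\sim\alpha^{-2sp/n}=\alpha^{-(p-2)}$ using $2s/n=1-2/p$, so the integrand is $\alpha^{p-1-2-(p-2)}=\alpha^{-1}$, which is \emph{not} integrable. This signals that the naive single choice of $R$ only gives the weak bound, and one must instead interpolate: having established $f\in L^{p,\infty}$, one cannot directly conclude $f\in L^p$ without more. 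The cleaner route, which I would ultimately follow, is to prove the two-sided estimate $\|f\|_{L^{p_0,\infty}}\le c\|f\|_{\dot H^{s_0}}$ and $\|f\|_{L^{p_1,\infty}}\le c\|f\|_{\dot H^{s_1}}$ for exponents straddling $(p,s)$ — but since $\dot H^{s}$ is a \emph{fixed} space this is awkward; so instead I would split $f$ itself at a fixed frequency $R_0$ (depending on $f$), put $f_{\rm low}\in L^\infty\cap L^2$ hence (by Lemma~\ref{Lpinterp}) in $L^p$, and $f_{\rm high}\in L^2$, and for $f_{\rm high}$ use a \emph{second} dyadic decomposition in frequency shells $|\xi|\sim 2^j R_0$, on each of which $\|P_jf\|_{L^2}\le 2^{-js}R_0^{-s}(\ldots)$, summing a geometric series after applying Bernstein-type $L^2\to L^\infty$ bounds on each shell and interpolating. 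The bookkeeping of these dyadic sums, and confirming the geometric ratio is $<1$ precisely when $s>0$, is the part requiring the most care; everything else is Cauchy--Schwarz, Plancherel, and Chebyshev.
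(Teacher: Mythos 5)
Your frequency splitting, the Cauchy--Schwarz bound $\|f_{<R}\|_{L^\infty}\le C_sR^{n/p}\|f\|_{\dot H^s}$, the Plancherel bound on $f_{>R}$, and the $\alpha$-dependent choice $R=R_\alpha\sim(\alpha/\|f\|_{\dot H^s})^{p/n}$ that kills $d_{f_{<R_\alpha}}(\alpha/2)$ are exactly the paper's argument. Where your proposal goes astray is the diagnosis that this route can only yield weak $L^p$: the logarithmic loss you found comes solely from discarding the frequency localisation when you replace $\|f_{>R_\alpha}\|_{L^2}^2$ by the crude bound $R_\alpha^{-2s}\|f\|_{\dot H^s}^2$. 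Keep instead $\|f_{>R_\alpha}\|_{L^2}^2=\int_{|\xi|\ge R_\alpha}|\hat f(\xi)|^2\,\d\xi$ and interchange the $\alpha$- and $\xi$-integrals in $p\int_0^\infty\alpha^{p-1}\,4\alpha^{-2}\|f_{>R_\alpha}\|_{L^2}^2\,\d\alpha$: the constraint $|\xi|\ge R_\alpha$ is $\alpha\le 2C_s|\xi|^{n/p}$, so the inner integral becomes $\int_0^{2C_s|\xi|^{n/p}}\alpha^{p-3}\,\d\alpha\le C|\xi|^{n(p-2)/p}=C|\xi|^{2s}$ (convergent precisely because $p>2$), and the whole expression is bounded by $C\|f\|_{\dot H^s}^2$. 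This Fubini interchange is the single missing idea; with it the direct argument gives the strong $L^p$ bound (after normalising $\|f\|_{\dot H^s}=1$), and no interpolation or second decomposition is needed.

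The fallback route you propose instead does not close. On the shell $\{|\xi|\simeq 2^jR_0\}$, Bernstein plus interpolation gives $\|P_jf\|_{L^p}\le c\,(2^jR_0)^{n(1/2-1/p)}\|P_jf\|_{L^2}=c\,(2^jR_0)^{s}\|P_jf\|_{L^2}\simeq c\,\|P_jf\|_{\dot H^s}$: at the endpoint the powers of $2^{js}$ cancel \emph{exactly}, so there is no geometric ratio less than one, and the claim that the ratio is $<1$ precisely when $s>0$ is false. The triangle inequality then only bounds $\|f_{>R_0}\|_{L^p}$ by $\sum_j\|P_jf\|_{\dot H^s}$, an $\ell^1$ sum of quantities that membership in $\dot H^s$ controls merely in $\ell^2$; this is the (strictly stronger) Besov-type norm $\sum_j 2^{js}\|P_jf\|_{L^2}$, which can be infinite for $f\in\dot H^s$. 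Rescuing the dyadic route needs a genuinely new ingredient (the Littlewood--Paley square-function characterisation of $L^p$, or duality), or simply a return to the distribution-function argument above. Your other idea, straddling $(p,s)$ with two weak estimates, can be made to work, but only by recasting the embedding as convolution with the Riesz kernel $|x|^{-(n-s)}\in L^{n/(n-s),\infty}$ and applying the sharpened Young inequality obtained from Marcinkiewicz interpolation; that is precisely the paper's second proof in Section~\ref{sec:Marcin}, not an argument available with Lemma~\ref{Lpinterp} alone.
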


\begin{proof} First we prove the result when $\|f\|_{\dot H^s}=1$. For such an $f$, write $f=f_{<R}+f_{>R}$, where
\begin{equation}\label{flgR}
f_{<R}={\mathscr F}^{-1}(\hat f\chi_{\{|\xi|\le R\}})\qquad\mbox{and}\qquad f_{>R}={\mathscr F}^{-1}(\hat f\chi_{\{|\xi|>R\}}).
\end{equation}
In both expressions the Fourier inversion formula makes sense: for $f_{>R}$ we know that $\hat f\chi_{>R}\in L^2(\R^n)$, and $\mathscr F$ (and likewise ${\mathscr F}^{-1}$) is defined on $L^2$; while for $f_{<R}$ we know that $\hat f\in L^1_{\rm loc}(\R^n)$, and so $\hat f\chi_{\le R}\in L^1(\R^n)$ which means that we can write $f_{<R}$ using the integral form of the inversion formula
(\ref{finversion}) to write
$$
f_{<R}(x)=\int_{|\xi|\le R} \e^{2\pi\ri \xi\cdot x}\hat f(\xi)\,\d\xi.
$$

Thus
\begin{align*}
\|f_{<R}\|_{L^\infty}&\le\int_{|\xi|\le R}|\xi|^{-s}|\xi|^s|\hat f(\xi)|\,\d\xi\\
&\le\left(\int_{|\xi|\le R}|\xi|^{-2s}\,\d\xi\right)^{1/2}\|f\|_{\dot H^s}= C_sR^{n/2-s}=C_sR^{n/p},
\end{align*}
since we took $\|f\|_{\dot H^s}=1$ and $s=n(\frac{1}{2}-\frac{1}{p})$. Now, since for any choice of $R$
$$
d_f(\alpha)\le d_{f_{<R}}(\alpha/2)+ d_{f_{>R}}(\alpha/2)
$$
(using (\ref{simple})), we can choose $R$ to depend on $\alpha$,  $R=R_\alpha:=(\alpha/2C_s)^{p/n}$, and then we have
$$
d_{f_{<R_\alpha}}(\alpha/2)=0,
$$
it follows that $d_f(\alpha)\le d_{f_{>R_\alpha}}(\alpha/2)$. Thus, using the fact that the Fourier transform is an isometry from $L^2$ into itself,
\begin{align*}
\|f\|_{L^p}^p&\le p\int_0^\infty\alpha^{p-1}d_{f_{>R_\alpha}}(\alpha/2)\,\d\alpha\\
&\le p\int_0^\infty\alpha^{p-1}\frac{4}{\alpha^2}\|f_{>R_\alpha}\|_{L^2}^2\,\d\alpha\\
&= C\int_0^\infty \alpha^{p-3}\|{\mathscr F}(f_{>R_\alpha})\|_{L^2}^2\,\d\alpha\\
&= C\int_0^\infty \alpha^{p-3}\int_{|\xi|\ge R_\alpha}|\hat f(\xi)|^2\,\d\xi\,\d\alpha\\
& =C\int_{\R^n}\left(\int_0^{2C_s|\xi|^{n/p}}\alpha^{p-3}\,\d\alpha\right)|\hat f(\xi)|^2\,\d\xi\\
&\le C\int_{\R^n}|\xi|^{n(p-2)/p}|\hat f(\xi)|^2\,\d s\\
& = C,
\end{align*}
since $n(p-2)/p=2s$ and we took $\|f\|_{\dot H^s}=1$.

Thus for $f\in\dot H^s$ with $\|f\|_{\dot H^s}=1$ we have $\|f\|_{L^p}\le C$, and (\ref{endpt}) follows for general $f\in\dot H^s$ on applying this result to $g=f/\|f\|_{\dot H^s}$.\end{proof}

\section{A weak-strong Bernstein inequality}\label{sec:bernstein}

In the next section we will require a result, known as Bernstein's inequality, that provides integrability of $f$ assuming localisation of its Fourier transform: if $\hat f$ is supported in $B(0,R)$ (the ball of radius $R$) then for any $1\le p\le q\le \infty$ if $f\in L^p(\R^n)$ then
\begin{equation}\label{normalB}
\|f\|_{L^q}\le c_{p,q}R^{n(1/p-1/q)}\|f\|_{L^p}.
\end{equation}
For our purposes we will require a version of this inequality that replaces $L^p$ by $L^{p,\infty}$ on the right-hand side.

As in the standard proof of (\ref{normalB}), we make use of the following simple result. We use the notation ${\mathfrak D}_hf(x)=h^{-n}f(x/h)$; note that $\widehat{{\mathfrak D}_h}(x)=\hat f(hx)$. The support of $g\in\S'$ is the intersection of all closed sets $K$ such that $\<g,\phi\>=0$ whenever the support of $\phi\in\S$ is disjoint from $K$.

\begin{lemma}
  There is a fixed $\phi\in\S$ such that if $\hat f$ is supported in $B(0,R)$ then $f=({\mathfrak D}_{1/R}\phi)\star f$.
\end{lemma}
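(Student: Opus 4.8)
The plan is to move to the Fourier side and use the convolution theorem. Since ${\mathfrak D}_{1/R}\phi(x)=R^n\phi(Rx)$, a one-line change of variables ($y=x/h$ in the integral defining the transform) gives the scaling identity $\widehat{{\mathfrak D}_h f}(\xi)=\hat f(h\xi)$, and in particular $\widehat{{\mathfrak D}_{1/R}\phi}(\xi)=\hat\phi(\xi/R)$. Combining this with the identity $\widehat{f\star g}=\hat f\,\hat g$ (valid for $f\in\S$, $g\in\S'$, as recalled at the start of Section~\ref{sec:Young}) we obtain
$$
\widehat{({\mathfrak D}_{1/R}\phi)\star f}(\xi)=\hat\phi(\xi/R)\,\hat f(\xi).
$$
So it is enough to fix $\phi\in\S$ for which $\hat\phi(\xi/R)\,\hat f(\xi)=\hat f(\xi)$ for every $f$ whose transform $\hat f$ is supported in $B(0,R)$; since $\hat f$ vanishes outside $B(0,R)$, this holds as soon as $\hat\phi\equiv1$ on $B(0,1)$, because then $\hat\phi(\xi/R)=1$ whenever $|\xi|\le R$. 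Equality of the two Fourier transforms then yields $({\mathfrak D}_{1/R}\phi)\star f=f$ after applying the Fourier inversion formula in $\S'$.

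It remains to produce a single $\phi\in\S$ with $\hat\phi\equiv1$ on $B(0,1)$. I would pick any bump function $\psi\in C_c^\infty(\R^n)$ with $\psi\equiv1$ on $B(0,1)$ (say, with $\psi$ supported in $B(0,2)$); such a $\psi$ exists and certainly lies in $\S$. Since ${\mathscr F}$ maps $\S$ bijectively onto itself, setting $\phi={\mathscr F}^{-1}\psi\in\S$ gives $\hat\phi=\psi$, which has the required property; note that $\phi$ depends only on the chosen $\psi$, and in particular not on $R$ or $f$, as the statement demands.

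I do not expect a genuine obstacle here. The two points that deserve a word of care are the scaling identity $\widehat{{\mathfrak D}_h f}(\xi)=\hat f(h\xi)$, which is immediate from the substitution above, and the fact that $({\mathfrak D}_{1/R}\phi)\star f$ and the product rule for its Fourier transform are meaningful when $f$ is only a tempered distribution — but this has already been set up in Section~\ref{sec:Young} via $\langle f\star g,\phi\rangle=\langle g,\sigma(f)\star\phi\rangle$. The existence of the cut-off $\psi$ and the surjectivity of ${\mathscr F}$ on $\S$ are standard facts about the Schwartz class.
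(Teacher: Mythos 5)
Your proof is correct and follows essentially the same route as the paper's: take $\phi\in\S$ with $\hat\phi\equiv1$ on $B(0,1)$, note that $\widehat{{\mathfrak D}_{1/R}\phi}(\xi)=\hat\phi(\xi/R)=1$ on $B(0,R)$, and conclude via the convolution theorem that $({\mathfrak D}_{1/R}\phi)\star f-f$ has vanishing Fourier transform. The extra detail you supply (constructing $\phi$ as ${\mathscr F}^{-1}$ of a smooth cut-off, and noting that the convolution identity is meaningful in $\S'$) is left implicit in the paper but is exactly the right justification.
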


\begin{proof}
  Take $\phi\in{\mathscr S}$ so that $\hat\phi=1$ on $B(0,1)$. Then
  $$
  \widehat{{\mathfrak D}_{1/R}\phi}(\xi)=\hat\phi(\xi/R)
  $$
  which is equal to $1$ on $B(0,R)$. Thus $({\mathfrak D}_{1/R}\phi)\star f-f$ has Fourier transform zero, and the lemma follows.
\end{proof}

For use in the proof of our next lemma, note that
\begin{equation}\label{DphiLp}
\|{\mathfrak D}_{1/R}\phi\|_{L^r}=R^{n(1-1/r)}\|\phi\|_{L^r}.
\end{equation}

\begin{lemma}[Weak-strong Bernstein inequality]\label{Bernstein}
Let $1\le p<\infty$ and suppose that $f\in L^{p,\infty}(\R^n)$ and that $\hat f$ is supported in $B(0,R)$. Then for each $q$ with $p<q<\infty$ there exists a constant $c_{p,q}$ such that
\begin{equation}\label{bernstein}
\|f\|_{L^{q}}\le cR^{n(1/p-1/q)}\|f\|_{L^{p,\infty}}.
\end{equation}
\end{lemma}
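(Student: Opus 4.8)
The plan is to read off $f$ as a convolution, estimate that convolution in a weak space via Proposition~\ref{prop:wYoung}, and then upgrade to a strong $L^q$ bound by interpolation. Concretely, by the preceding lemma $f=({\mathfrak D}_{1/R}\phi)\star f$ with $\phi\in\S$ fixed, and we have the scaling identity \eqref{DphiLp}. A single use of weak Young only produces a bound in some $L^{q',\infty}$; the device that recovers the full $L^q$ norm on the left is to trap the target exponent $q$ between two exponents, apply weak Young twice, and interpolate with Lemma~\ref{Lpinterp}.

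Assume first $1<p<\infty$, fix $q\in(p,\infty)$, and choose $p_1,p_2$ with $p<p_1<q<p_2<\infty$. For $i=1,2$ let $r_i$ be given by $1/p_i+1=1/p+1/r_i$; since $p<p_i<\infty$ and $p>1$ one checks $1<r_i<p'$, so ${\mathfrak D}_{1/R}\phi\in L^{r_i}$ and Proposition~\ref{prop:wYoung} applies to $f=({\mathfrak D}_{1/R}\phi)\star f$, with $f$ itself in the weak slot $L^{p,\infty}$ and ${\mathfrak D}_{1/R}\phi$ in the $L^{r_i}$ slot. Using \eqref{DphiLp} and the identity $1-1/r_i=1/p-1/p_i$ this gives
$$
\|f\|_{L^{p_i,\infty}}\le c\,\|{\mathfrak D}_{1/R}\phi\|_{L^{r_i}}\|f\|_{L^{p,\infty}}=c\,R^{n(1-1/r_i)}\|\phi\|_{L^{r_i}}\|f\|_{L^{p,\infty}}=c_i\,R^{n(1/p-1/p_i)}\|f\|_{L^{p,\infty}}.
$$
Since $f\in L^{p_1,\infty}\cap L^{p_2,\infty}$ with $p_1<q<p_2$, Lemma~\ref{Lpinterp} yields $f\in L^q$ with $\|f\|_{L^q}\le c\|f\|_{L^{p_1,\infty}}^{\theta}\|f\|_{L^{p_2,\infty}}^{1-\theta}$ where $1/q=\theta/p_1+(1-\theta)/p_2$; inserting the two bounds above, the powers of $R$ combine to $n[\theta(1/p-1/p_1)+(1-\theta)(1/p-1/p_2)]=n(1/p-1/q)$, which is \eqref{bernstein}. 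Any admissible $p_1,p_2$ works, and the resulting constant depends only on $n,p,q$.

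The step I expect to be the real obstacle is the endpoint $p=1$, since Proposition~\ref{prop:wYoung} needs the weak exponent strictly above $1$ and the argument above does not start there. One clean way around it is to prove first that a function band-limited to $B(0,R)$ and lying in $L^{1,\infty}$ is automatically in $L^\infty$, with $\|f\|_{L^\infty}\le cR^n\|f\|_{L^{1,\infty}}$ --- heuristically, a band-limited function attaining a large value must do so on a set of measure $\gtrsim R^{-n}$, and membership in $L^{1,\infty}$ bounds such sets --- and then interpolate between $L^{1,\infty}$ and $L^\infty$ by Lemma~\ref{Lpinterp} (using the convention $L^{\infty,\infty}=L^\infty$), which again gives the exponent $n(1-1/q)$; making that $L^\infty$ bound rigorous by elementary means is the delicate point. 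For $1<p<\infty$ there is also a more hands-on alternative avoiding Proposition~\ref{prop:wYoung}: split $f=f_{M-}+f_{M+}$ by Lemma~\ref{splitg}, apply the strong Young inequality (Lemma~\ref{sYoung}) together with \eqref{DphiLp} to $({\mathfrak D}_{1/R}\phi)\star f_{M-}$ and $({\mathfrak D}_{1/R}\phi)\star f_{M+}$ separately, and optimise over $M$; this, however, still needs $p>1$ (so that $f_{M+}\in L^1$) and is messier.
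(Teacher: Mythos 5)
Your argument for $1<p<\infty$ is correct and is essentially the paper's: write $f=({\mathfrak D}_{1/R}\phi)\star f$, apply the weak Young inequality (Proposition \ref{prop:wYoung}) to land in two weak spaces straddling $q$, and upgrade to $L^q$ with Lemma \ref{Lpinterp}, the powers of $R$ combining via \eqref{DphiLp}. Your choice of bracketing exponents $p<p_1<q<p_2$ is in fact more careful than the paper's, which interpolates between $L^{1,\infty}$ and $L^{2q,\infty}$: for $p>1$ the bound $\|f\|_{L^{1,\infty}}\le cR^{n(1/p-1)}\|f\|_{L^{p,\infty}}$ does not follow from Proposition \ref{prop:wYoung} (the required strong exponent $r$ would satisfy $r\le 1$ only when $p=1$), so keeping both target exponents strictly above $p$ is the right move and quietly repairs the published argument.

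The genuine gap is the endpoint $p=1$, which the statement includes. You correctly identify that Proposition \ref{prop:wYoung} does not accept a weak-$L^1$ input, but your substitute --- the bound $\|f\|_{L^\infty}\le cR^n\|f\|_{L^{1,\infty}}$ for band-limited $f$ --- is only asserted heuristically, and you concede you cannot make it rigorous; as written the lemma is therefore unproved for $p=1$. The difficulty is real: the convolution identity alone cannot deliver this bound, since $g\star h$ can be unbounded for $g\in L^{1,\infty}$ and $h\in\S$ (take $g=|x|^{-n}\chi_{|x|\le 1}$), so one must exploit the band-limitedness of $f$ a second time, beyond $f=({\mathfrak D}_{1/R}\phi)\star f$. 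Your fallback via Lemma \ref{splitg} also fails at $p=1$ because the tail $f_{M+}$ then lies only in $L^t$ with $t<1$, where Young's inequality (Lemma \ref{sYoung}) is unavailable. The standard route to weak-$L^p$ (or $L^p$ with $p<1$) Bernstein inequalities goes through pointwise maximal-function estimates for band-limited functions, which is outside the toolbox assembled here. To be fair, the paper's own proof is equally cavalier at $p=1$ (it invokes Proposition \ref{prop:wYoung} with weak exponent $1$); but for a self-contained argument you must either prove the $L^\infty$ bound or restrict the statement to $p>1$.
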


\begin{proof}
We follow the standard proof, replacing Young's inequality by its weak form, and making use of the interpolation result of Lemma \ref{Lpinterp}. First we prove the weak-weak version
$$
\|f\|_{L^{q,\infty}}\le cR^{n(1/p-1/q)}\|f\|_{L^{p,\infty}}
$$
valid for all $1\le p\le q<\infty$. To do this we simply apply the weak form of Young's inequality (Proposition \ref{prop:wYoung}) to $f=\phi^{1/R}\star f$:
\begin{align*}
\|f\|_{L^{q,\infty}}&=\|({\mathfrak D}_{1/R}\phi)\star f\|_{L^{q,\infty}}\\
&\le c\|{\mathfrak D}_{1/R}\phi\|_{L^r}\|f\|_{L^{p,\infty}},
\end{align*}
where
$$
1+\frac{1}{q}=\frac{1}{r}+\frac{1}{p}
$$
with $1\le p<\infty$ and $1<q,r<\infty$. It follows using (\ref{DphiLp}) that
  $$
  \|f\|_{L^{1,\infty}}\le cR^{n(1/p-1)}\|f\|_{L^{p,\infty}}
 \qquad\mbox{and}\qquad
  \|f\|_{L^{2q,\infty}}\le cR^{n(1/p-1/2q)}\|f\|_{L^{p,\infty}},
  $$
 and we then obtain (\ref{bernstein}) by interpolation of $L^q$ between $L^{1,\infty}$ and $L^{2q,\infty}$ (Lemma \ref{Lpinterp}),
    \begin{align}
    \|f\|_{L^q}&\le c\|f\|_{L^{1,\infty}}^{1/(2q-1)}\|f\|_{L^{2q,\infty}}^{(2q-2)/(2q-1)}\nonumber\\
    &\le cR^{n(1/p-1/q)}\|f\|_{L^{p,\infty}}.\qedhere
    \end{align}
\end{proof}

\section{Generalised Gagliardo--Nirenberg inequality I}\label{sec:GGN1}


We now prove our first generalisation of the Gagliardo--Nirenberg inequality, replacing the $L^q$ norm on the right-hand side of (\ref{GN}) by the norm in $L^{q,\infty}$. The new part of the following result is when $s\ge n/2$, with the case $s=n/2$ particularly interesting: in the range $n(1/2-1/p)<s<n/2$ the inequality follows using weak-$L^p$ interpolation from Lemma \ref{Lpinterp} coupled with the Sobolev embedding $\dot H^{n(1/2-1/p)}\subset L^p$ from Theorem \ref{EPS}.

\begin{theorem}\label{thm:Ladyz}
Take $1\le q<p$ and $s\ge0$ with $s>n(1/2-1/p)$. There exists a constant $c_{p,q,s}$  such that if $f\in L^{q,\infty}(\R^n)\cap \dot H^s(\R^n)$ then $f\in L^p(\R^n)$ and
  \begin{equation}\label{interpolated}
  \|f\|_{L^p}\le c_{p,q,s}\|f\|_{L^{q,\infty}}^{\theta}\|f\|_{\dot H^s}^{1-\theta}\qquad\mbox{for every}\quad f\in L^{q,\infty}\cap\dot H^s,
  \end{equation}
  where
  \begin{equation}\label{alphais}
  \frac{1}{p}=\frac{\theta}{q}+(1-\theta)\left(\frac{1}{2}-\frac{s}{n}\right).
  \end{equation}
\end{theorem}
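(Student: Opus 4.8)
The plan is to normalise and then split into two regimes. Since the relation (\ref{alphais}) is exactly what makes the inequality (\ref{interpolated}) invariant under the two-parameter scaling $f\mapsto\mu f(\lambda\,\cdot)$ (one checks $\|\mu f(\lambda\,\cdot)\|_{L^p}=\mu\lambda^{-n/p}\|f\|_{L^p}$, $\|\mu f(\lambda\,\cdot)\|_{L^{q,\infty}}=\mu\lambda^{-n/q}\|f\|_{L^{q,\infty}}$, $\|\mu f(\lambda\,\cdot)\|_{\dot H^s}=\mu\lambda^{s-n/2}\|f\|_{\dot H^s}$), it suffices to prove $\|f\|_{L^p}\le c$ under the normalisation $\|f\|_{L^{q,\infty}}=\|f\|_{\dot H^s}=1$. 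When $n(1/2-1/p)<s<n/2$ no real harmonic analysis is needed: apply Theorem \ref{EPS} with the exponent $p^\ast:=2n/(n-2s)$, for which $s=n(1/2-1/p^\ast)$, to get $f\in L^{p^\ast}$ with $\|f\|_{L^{p^\ast}}\le c\|f\|_{\dot H^s}$; the hypothesis $s>n(1/2-1/p)$ is equivalent to $p<p^\ast$, hence $q<p<p^\ast$, and Lemma \ref{Lpinterp} (interpolating $L^p$ between $L^{q,\infty}$ and $L^{p^\ast}\subset L^{p^\ast,\infty}$) gives (\ref{interpolated}) with $1/p=\theta/q+(1-\theta)/p^\ast=\theta/q+(1-\theta)(1/2-s/n)$, i.e.\ (\ref{alphais}).

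The substance is the case $s\ge n/2$, where there is no such $p^\ast$ and the Fourier transform really enters. Fix $\psi\in\S$ with $\hat\psi\equiv1$ on $B(0,1)$ and $\hat\psi$ supported in $B(0,2)$, and for $R>0$ write $f=f_{<R}+f_{>R}$, with $f_{<R}=({\mathfrak D}_{1/R}\psi)\star f$ (Fourier transform supported in $B(0,2R)$) and $f_{>R}=f-f_{<R}$ (Fourier transform supported in $\{|\xi|\ge R\}$); this is legitimate since $\hat f\in L^1_{\rm loc}$. For the low-frequency part one wants the bound
$$\|f_{<R}\|_{L^\infty}\le c\,R^{n/q}\|f\|_{L^{q,\infty}},$$
obtained by applying a weak/Lorentz form of Young's inequality to $({\mathfrak D}_{1/R}\psi)\star f$ together with (\ref{DphiLp}). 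For the high-frequency part, fix any $\sigma\in(0,n/2)$; since $\hat f_{>R}$ is supported where $|\xi|\ge R$ and $\sigma\le s$, one has $\|f_{>R}\|_{\dot H^\sigma}\le c\,R^{\sigma-s}\|f\|_{\dot H^s}$, so Theorem \ref{EPS} applied with exponent $p_\sigma:=2n/(n-2\sigma)\in(2,\infty)$ gives $\|f_{>R}\|_{L^{p_\sigma}}\le c\,R^{\sigma-s}\|f\|_{\dot H^s}$.

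Now assume both norms equal $1$. Given $\alpha>0$, choose $R_\alpha$ with $c\,R_\alpha^{n/q}=\alpha/2$, so that $R_\alpha\simeq\alpha^{q/n}$ and $d_{f_{<R_\alpha}}(\alpha/2)=0$; then (\ref{simple}) and Chebyshev's inequality give
$$d_f(\alpha)\le d_{f_{>R_\alpha}}(\alpha/2)\le c\,\alpha^{-p_\sigma}\|f_{>R_\alpha}\|_{L^{p_\sigma}}^{p_\sigma}\le c\,\alpha^{-\beta},\qquad\beta:=p_\sigma\Bigl(1+\tfrac{q}{n}(s-\sigma)\Bigr),$$
while trivially $d_f(\alpha)\le\alpha^{-q}$ by (\ref{Lpw2df}). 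Feeding both bounds into (\ref{df2Lp}),
$$\|f\|_{L^p}^p=p\int_0^\infty\alpha^{p-1}d_f(\alpha)\,\d\alpha\le p\int_0^1\alpha^{p-1-q}\,\d\alpha+cp\int_1^\infty\alpha^{p-1-\beta}\,\d\alpha;$$
the first integral converges since $p>q$, and the second as soon as $\beta>p$. As $\sigma\uparrow n/2$ we have $p_\sigma\to\infty$ while $1+\tfrac{q}{n}(s-\sigma)\ge1$ (this is where $s\ge n/2$ is used), so $\beta\to\infty$; hence a fixed choice of $\sigma=\sigma(n,p,q,s)\in(0,n/2)$ makes $\beta>p$, giving $\|f\|_{L^p}\le c$. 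Undoing the scaling then yields (\ref{interpolated}) with $\theta$ necessarily given by (\ref{alphais}).

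The step I expect to be the main obstacle is the low-frequency bound $\|f_{<R}\|_{L^\infty}\le c\,R^{n/q}\|f\|_{L^{q,\infty}}$. A sharp Fourier truncation will not do here — its kernel fails to lie in $L^{q',1}$ once $q$ is large — which is precisely why one convolves against a Schwartz kernel; the estimate then follows directly from H\"older's inequality in Lorentz spaces, or, staying within the toolkit of this paper, from Proposition \ref{prop:wYoung} combined with Lemma \ref{Bernstein} and Lemma \ref{Lpinterp}. The endpoint $q=1$ requires a little extra care, since $L^{1,\infty}$ is not amenable to the splitting in Lemma \ref{splitg}.
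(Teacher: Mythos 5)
Your handling of the range $n(1/2-1/p)<s<n/2$ is exactly the reduction the paper records just before the theorem (Theorem \ref{EPS} plus Lemma \ref{Lpinterp}), so the real comparison concerns $s\ge n/2$, and there your route is genuinely different. The paper keeps the sharp cutoff $f_{<R}={\mathscr F}^{-1}(\hat f\chi_{|\xi|\le R})$, bounds the low frequencies in $L^p$ (finite $p$) via the weak--strong Bernstein inequality of Lemma \ref{Bernstein}, adds the two pieces in $L^p$ and optimises a single $R$, with a separate interpolation step to reach $1\le q<p<2$. You instead rerun the layer-cake scheme of Theorem \ref{EPS}: smooth cutoff, an $L^\infty$ bound on $f_{<R}$ so that $d_{f_{<R_\alpha}}(\alpha/2)=0$, an auxiliary exponent $\sigma<n/2$ fixed only at the end so that $\beta>p$, and a scaling normalisation that treats all $p>q$ at once. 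For $q>1$ this is correct: the $\dot H^\sigma$ bound on $f_{>R}$, the computation of $\beta$, and the convergence of both integrals all check out, and the one ingredient not literally in the paper --- the bound $\|({\mathfrak D}_{1/R}\psi)\star f\|_{L^\infty}\le cR^{n/q}\|f\|_{L^{q,\infty}}$ --- does follow from Lemma \ref{splitg} and H\"older (split $f=f_{M-}+f_{M+}$, put $f_{M-}$ in $L^{2q}$ and $f_{M+}$ in $L^1$, optimise $M$; note that neither Lemma \ref{sYoung} nor Proposition \ref{prop:wYoung} reaches the target $L^\infty$ directly, so some such argument is genuinely needed).

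The gap is the endpoint $q=1$, which the theorem includes and which you flag but substantially underestimate. The estimate $\|({\mathfrak D}_{1/R}\psi)\star f\|_{L^\infty}\le cR^{n}\|f\|_{L^{1,\infty}}$ is \emph{false}, not merely delicate: for $f_\epsilon=|x|^{-n}\chi_{\epsilon<|x|<1}$ (mollified if you wish) one has $\|f_\epsilon\|_{L^{1,\infty}}\le C$ uniformly in $\epsilon$ while $({\mathfrak D}_{1}\psi\star f_\epsilon)(0)\ge c\log(1/\epsilon)\to\infty$; more structurally, an $L^{1,\infty}$ function need not even be locally integrable, the Lorentz--H\"older route would require the degenerate space $L^{\infty,1}$, and Lemma \ref{splitg} offers no exponent $t<1$. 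So your argument as written proves the theorem only for $q>1$. To cover $q=1$ you must avoid the $L^\infty$ target on the low frequencies: either estimate $\|f_{<R}\|_{L^P}\le cR^{n(1-1/P)}\|f\|_{L^{1,\infty}}$ for a large finite $P$ via Lemma \ref{Bernstein} (stated for all $1\le p<\infty$) and then add the two pieces in $L^p$ with one optimised $R$, as the paper does, or retain your layer-cake structure but control $d_{f_{<R_\alpha}}(\alpha/2)$ by Chebyshev in $L^P$ and rebalance the choice $R_\alpha=\alpha^\gamma$ (taking $\gamma$ slightly below $\tfrac{q}{n}\cdot\tfrac{P-p}{P-q}$ and then $\sigma$ close to $n/2$ still closes the argument).
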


\begin{proof}
%
First we prove the theorem in the case $p\ge 2$. As in the proof of Theorem \ref{EPS} we write
$$
f=f_{<R}+f_{>R},
$$
where $f_{<R}$ and $f_{>R}$ are defined in (\ref{flgR}).

Using the endpoint Sobolev embedding $\dot H^{n(1/2-1/p)}(\R^n)\subset L^p(\R^n)$ from Theorem \ref{EPS} (taking $\dot H^0=L^2$ when $p=2$) we can estimate
\begin{align*}
\|f_{>R}\|_{L^p}&\le c\|f_{>R}\|_{\dot H^{n(1/2-1/p)}}\\
&=c\left(\int_{|\xi|\ge R}|\xi|^{2n(1/2-1/p)}|\hat f(\xi)|^2\,\d\xi\right)^{1/2}\\
&\le\frac{c}{R^{s-n(1/2-1/p)}}\left(\int_{|\xi|\ge R}|\xi|^{2s}|\hat f(\xi)|^2\,\d\xi\right)^{1/2}\\
&=\frac{c}{R^{s-n(1/2-1/p)}}\|f\|_{\dot H^s},
\end{align*}
while
$$
\|f_{<R}\|_{L^p}\le cR^{n(1/q-1/p)}\|f_{<R}\|_{L^{q,\infty}}\le cR^{n(1/q-1/p)}\|f\|_{L^{q,\infty}}
$$
using the weak-strong Bernstein inequality from Lemma \ref{Bernstein} and (\ref{wktri}).

Thus
$$
\|f\|_{L^p}\le c(R^{n(1/q-1/p)}\|f\|_{L^{q,\infty}}+R^{-s+n(1/2-1/p)}\|f\|_{\dot H^s}).
$$
Choosing
$$
R^{s+n(1/q-1/2)}=\frac{\|f\|_{\dot H^s}}{\|f\|_{L^{q,\infty}}}
$$
we obtain
\begin{equation}\label{halfI}
\|f\|_{L^p}\le c\|f\|_{L^{q,\infty}}^{\theta}\|f\|_{\dot H^s}^{1-\theta},
\end{equation}
where
$$
\theta=1-n\frac{1/q-1/p}{s+n(1/q-1/2)},
$$
which on rearrangement yields the condition (\ref{alphais}).

If $1\le q<p<2$ then we first interpolate $L^p$ between $L^{q,\infty}$ and $L^2$, and then use the above result with $p=2$. Setting $\frac{1}{2}=\frac{\theta'}{q}+(1-\theta')\left(\frac{1}{2}-\frac{s}{n}\right)$ we have
\begin{align*}
\|f\|_{L^p}&\le c\|f\|_{L^{q,\infty}}^{q(2-p)/p(2-q)}\|f\|_{L^2}^{2(p-q)/p(2-q)}\\
&\le c\|f\|_{L^{q,\infty}}^{q(2-p)/p(2-q)}\left(c\|f\|_{L^{q,\infty}}^{\theta'}\|f\|_{\dot H^s}^{1-\theta'}\right)^{2(p-q)/p(2-q)}\\
&=c\|f\|_{L^{q,\infty}}^\theta\|f\|_{\dot H^s}^{1-\theta},
\end{align*}
with $\theta$ given by (\ref{alphais}), as required.\end{proof}

\section{The space BMO of functions with bounded mean oscillation}\label{sec:BMO}

For any set $A\subset\R^n$ we write
$$
f_A=\frac{1}{|A|}\int_{A} f\,\d x
$$
for the average of $f$ over the set $A$. The space of functions with bounded mean oscillation, $\BMO(\R^n)$, consists of those functions $f$ for which
$$
\|f\|_\BMO:=\sup_{Q\subset\R^n}\frac{1}{|Q|}\int_Q|f-f_Q|\,\d x
$$
is finite, where the supremum is taken over all cubes $Q\subset\R^n$. Note that this is a not a norm (any constant function has $\|c\|_\BMO=0$), but BMO is a linear space, i.e.\ if $f,g\in\BMO$ then  $f+g\in\BMO$ and
$$
\|f+g\|_\BMO\le\|f\|_\BMO+\|g\|_\BMO.
$$
This space was introduced by John \& Nirenberg \cite{JN}; more details can be found in Chapter 7 of Grafakos \cite{Graf2}, for example.

BMO is a space with the same scaling as $L^\infty$, but is a larger space. Indeed, if $f\in L^\infty(\R^n)$ then clearly for any cube $Q$
\be{fmfQf}
\int_Q|f-f_Q|\,\d x\le2\int_Q|f|\le 2|Q|\|f\|_{L^\infty},
\ee
and so
\be{Linf2BMO}
\|f\|_\BMO\le2\|f\|_{L^\infty}.
\ee
However, the function $\log|x|\in\BMO(\R^n)$ but is not bounded on $\R^n$ (Example 7.1.3 in Grafakos \cite{Graf2}).

The endpoint Sobolev embedding from Theorem \ref{EPS} fails when $s=n/2$, but at this endpoint we still have $\dot H^{n/2}(\R^n)\subset\BMO(\R^n)$. This is simple to show (following Theorem 1.48 in Bahouri et al.\ \cite{C**}), if we note that
for any $x\in Q$
$$
|f(x)-f_Q|=\left|\frac{1}{|Q|}\int_Q f(x)-f(y)\,\d y\right|\le\sqrt{n}|Q|^{1/n}\|\nabla f\|_{L^\infty(Q)}.
$$

\begin{lemma}\label{Hn2nBMO}
If $f\in L^1_{\rm loc}(\R^n)\cap\dot H^{n/2}(\R^n)$ then $f\in\BMO(\R^n)$ and there exists a constant $C=C(n)$ such that
$$
\|f\|_\BMO\le C\|f\|_{\dot H^{n/2}}\qquad\mbox{for all}\quad f\in L^1_{\rm loc}(\R^n)\cap\dot H^{n/2}(\R^n).
$$
\end{lemma}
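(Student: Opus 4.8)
The plan is to fix a cube $Q\subset\R^n$ of side length $\ell$ and split $f$ at frequency $R=1/\ell$, exactly as in (\ref{flgR}): $f=f_{<R}+f_{>R}$ with $f_{<R}={\mathscr F}^{-1}(\hat f\chi_{\{|\xi|\le R\}})$ and $f_{>R}={\mathscr F}^{-1}(\hat f\chi_{\{|\xi|>R\}})$. Since $\hat f\in L^1_{\rm loc}(\R^n)$ we have $\hat f\chi_{\{|\xi|\le R\}}\in L^1$, and also $\xi\mapsto\xi\,\hat f(\xi)\chi_{\{|\xi|\le R\}}\in L^1$ by Cauchy--Schwarz, using the (borderline convergent) computation $\int_{|\xi|\le R}|\xi|^{2-n}\,\d\xi=c_nR^2<\infty$; hence $f_{<R}$ is $C^1$ with
$$
\|\nabla f_{<R}\|_{L^\infty}\le 2\pi\int_{|\xi|\le R}|\xi|^{1-n/2}|\xi|^{n/2}|\hat f(\xi)|\,\d\xi\le 2\pi\Bigl(\int_{|\xi|\le R}|\xi|^{2-n}\,\d\xi\Bigr)^{1/2}\|f\|_{\dot H^{n/2}}=c_nR\,\|f\|_{\dot H^{n/2}}.
$$
Meanwhile $\hat f\chi_{\{|\xi|>R\}}\in L^2$, and on $\{|\xi|>R\}$ the bound $|\hat f(\xi)|^2\le R^{-n}|\xi|^n|\hat f(\xi)|^2$ gives $\|f_{>R}\|_{L^2}\le R^{-n/2}\|f\|_{\dot H^{n/2}}$. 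Since $f_{<R}$ and $f_{>R}$ are both locally integrable functions agreeing with $f$ as distributions, $f=f_{<R}+f_{>R}$ a.e.\ and $f_Q=(f_{<R})_Q+(f_{>R})_Q$.

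For the low-frequency part, cubes are convex and $\operatorname{diam}(Q)=\sqrt n\,\ell=\sqrt n\,|Q|^{1/n}$, so the mean value theorem gives $|f_{<R}(x)-f_{<R}(y)|\le\sqrt n\,|Q|^{1/n}\|\nabla f_{<R}\|_{L^\infty}$ for $x,y\in Q$ (this is the estimate quoted just before the lemma statement, applied to $f_{<R}$); averaging over $y$ and then $x$ yields
$$
\frac1{|Q|}\int_Q|f_{<R}-(f_{<R})_Q|\,\d x\le\sqrt n\,|Q|^{1/n}\|\nabla f_{<R}\|_{L^\infty}\le\sqrt n\,c_n\,(\ell R)\,\|f\|_{\dot H^{n/2}}.
$$
For the high-frequency part, $|f_{>R}-(f_{>R})_Q|$ has $Q$-average at most $2|Q|^{-1}\int_Q|f_{>R}|\,\d x$, and Cauchy--Schwarz on $Q$ gives
$$
\frac1{|Q|}\int_Q|f_{>R}-(f_{>R})_Q|\,\d x\le\frac2{|Q|^{1/2}}\|f_{>R}\|_{L^2}\le 2(\ell R)^{-n/2}\|f\|_{\dot H^{n/2}}.
$$
Adding the two estimates via $f-f_Q=(f_{<R}-(f_{<R})_Q)+(f_{>R}-(f_{>R})_Q)$ and making the scale-matching choice $R=1/\ell$, so that $\ell R=1$, all powers of $\ell$ cancel and
$$
\frac1{|Q|}\int_Q|f-f_Q|\,\d x\le(\sqrt n\,c_n+2)\,\|f\|_{\dot H^{n/2}},
$$
with a constant independent of $Q$. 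Taking the supremum over all cubes $Q$ proves the lemma.

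The proof has no serious obstacle; the only care needed is the bookkeeping that justifies that $f_{<R}$ is genuinely $C^1$ with the claimed gradient bound — this is precisely where the hypothesis $\hat f\in L^1_{\rm loc}$ and the borderline convergence of $\int_{|\xi|\le R}|\xi|^{2-n}\,\d\xi$ (which works exactly at the exponent $n/2$) are used — and that $f\in L^1_{\rm loc}$ is what legitimises writing $f_Q=(f_{<R})_Q+(f_{>R})_Q$. The one real idea is the choice $R=1/\ell$, which forces the combined estimate to be dimensionless and hence uniform over cubes, mirroring the fact that $\dot H^{n/2}$ and $\BMO$ carry the same scaling.
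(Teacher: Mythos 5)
Your proof is correct and follows essentially the same route as the paper's: the same high/low frequency splitting $f=f_{<R}+f_{>R}$, the same Cauchy--Schwarz gradient bound for the low frequencies and Plancherel bound for the high frequencies, and the same scale-matching choice $R=|Q|^{-1/n}=1/\ell$. The extra care you take in justifying that $f_{<R}$ is $C^1$ and that $f=f_{<R}+f_{>R}$ almost everywhere is a welcome refinement of the paper's argument, not a deviation from it.
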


\begin{proof}
We write $f=f_{<R}+f_{>R}$ as in the proof of Theorem \ref{EPS} and then, recalling (\ref{fmfQf}),
\begin{align*}
\frac{1}{|Q|}&\int_Q|f-f_Q|\le\sqrt{n}|Q|^{1/n}\|\nabla f_{<R}\|_{L^\infty(Q)}+\frac{1}{|Q|}\int_Q|f_{>R}-(f_{>R})_Q|\\
&\le\sqrt{n}|Q|^{1/n}\int_{|\xi|\le R}|\xi||\hat f(\xi)|\,\d\xi+\frac{2}{|Q|^{1/2}}\left(\int_Q|f_{<R}|^2\right)^{1/2}\\
&\le\sqrt{n}|Q|^{1/n}cR\left(\int_{\R^n}|\xi|^{n/2}|\hat f(\xi)|^2\,\d\xi\right)^{1/2}+\frac{2}{|Q|^{1/2}}\left(\int_{|\xi|\ge R}|\hat f(\xi)|^2\right)^{1/2}\\
&\le c_n[|Q|^{1/n}R+|Q|^{-1/2}R^{n/2}]\|f\|_{\dot H^{n/2}}.
\end{align*}
Choosing $R=|Q|^{-1/n}$ yields
$$
\frac{1}{|Q|}\int_Q|f-f_Q|\le C\|f\|_{\dot H^{n/2}};
$$
taking the supremum over all cubes $Q\subset\R^n$ yields $\|f\|_\BMO\le C\|f\|_{\dot H^{n/2}}$.\end{proof}


We now want to prove a result, due to John \& Nirenberg \cite{JN}, that gives an important property of functions in BMO that will be crucial in the proof of the inequality
$$
   \|f\|_{L^p}\le C\|f\|_{L^{q,\infty}}^{q/p}\|f\|_\BMO^{1-q/p},\qquad q<p<\infty,
$$
given in the next section. To prove the John--Nirenberg inequality we will need a Calderon--Zygmund type decomposition of $\R^n$ into a family of cubes with certain useful properties. The proof that such a decomposition is possible uses the Lebesgue Differentiation Theorem, which we now state (without proof).

We define the uncentred cubic maximal function by
$$
\Md f(x)=\sup_{Q\ni x}\frac{1}{|Q|}\int_Q|f(y)|\,\d y,
$$
where the supremum is taken over all cubes $Q\subset\R^n$ that contain $x$. The proof of the Lebesgue Differentiation Theorem uses the fact that $\Md$ maps $L^1$ into $L^{1,\infty}$; see Section 3.4 in Folland \cite{F} or Section 2.1 in Grafakos \cite{Grafakos} for details.

\begin{theorem}[Lebesgue Differentiation Theorem]\label{LDT}
If $f\in L^1_{\rm loc}(\R^n)$ then
\begin{equation}\label{LebDiff}
\lim_{|Q|\to0}\frac{1}{|Q|}\int_Qf(y)\,\d y=f(x)
\end{equation}
for almost every $x\in\R^n$, where $Q$ is a cube containing $x$. As a consequence, $|f(x)|\le \Md f(x)$ almost everywhere.
\end{theorem}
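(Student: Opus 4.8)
The plan is to follow the classical three-step argument: first reduce the almost-everywhere statement to an estimate on a maximal ``oscillation'' quantity, then control that quantity using the weak $(1,1)$ bound for $\Md$ together with the density of continuous functions in $L^1$, and finally deduce the last assertion by applying the result to $|f|$. Since (\ref{LebDiff}) is a local statement, I would first reduce to $f\in L^1(\R^n)$: for each $N\in\N$, if $x\in B(0,N)$ and $Q\ni x$ has small measure then $Q\subset B(0,N+1)$, so the averages of $f$ over such $Q$ coincide with those of $f\chi_{B(0,N+1)}\in L^1(\R^n)$; proving the result for each such truncation and taking the union of the exceptional null sets over $N$ gives the general case. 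So assume $f\in L^1(\R^n)$ and, for $\lambda>0$, set
$$
\Omega f(x)=\limsup_{|Q|\to0,\ Q\ni x}\left|\frac{1}{|Q|}\int_Q f(y)\,\d y-f(x)\right|.
$$
Then (\ref{LebDiff}) holds at $x$ exactly when $\Omega f(x)=0$, so it suffices to show $\mu\{x:\Omega f(x)>\lambda\}=0$ for every $\lambda>0$.

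Given $\varepsilon>0$, use the density of continuous compactly supported functions in $L^1$ to write $f=g+h$ with $g$ continuous and $\|h\|_{L^1}<\varepsilon$. For the continuous part, (uniform) continuity of $g$ gives $\frac{1}{|Q|}\int_Q g\to g(x)$ as $|Q|\to0$ with $x\in Q$, at \emph{every} $x$, so $\Omega g\equiv0$; the obvious subadditivity $\Omega(g+h)\le\Omega g+\Omega h$ then yields $\Omega f\le\Omega h$ pointwise. Since $\left|\frac{1}{|Q|}\int_Q h\right|\le\Md h(x)$ whenever $x\in Q$, we get $\Omega h(x)\le\Md h(x)+|h(x)|$, and therefore
$$
\{x:\Omega f(x)>\lambda\}\subset\{x:\Md h(x)>\lambda/2\}\cup\{x:|h(x)|>\lambda/2\}.
$$
Applying the weak $(1,1)$ bound for $\Md$ to the first set and Chebyshev's inequality to the second, both have measure at most $C\lambda^{-1}\|h\|_{L^1}\le C\lambda^{-1}\varepsilon$. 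Letting $\varepsilon\to0$ forces $\mu\{x:\Omega f(x)>\lambda\}=0$, which proves (\ref{LebDiff}).

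For the final assertion, apply the statement just established to $|f|$, which also lies in $L^1_{\rm loc}(\R^n)$: for almost every $x$ we have $\frac{1}{|Q|}\int_Q|f(y)|\,\d y\to|f(x)|$ as $|Q|\to0$ with $x\in Q$. Since every such average is, by definition, bounded above by $\Md f(x)$, passing to the limit gives $|f(x)|\le\Md f(x)$ for almost every $x$.

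The only genuinely nontrivial input—and the step I expect to be the main obstacle—is the weak $(1,1)$ inequality $\mu\{\Md f>\lambda\}\le C\lambda^{-1}\|f\|_{L^1}$ for the uncentred cubic maximal function; this is precisely where a Vitali-type covering lemma enters, and it is the ingredient that the statement in the excerpt explicitly defers to Folland \cite{F} or Grafakos \cite{Grafakos}. Everything else is soft: the density of continuous functions, the elementary bound $\Omega h\le\Md h+|h|$, and the limiting argument in $\varepsilon$.
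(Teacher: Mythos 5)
The paper states this theorem without proof, deferring to Folland and Grafakos and noting only that the essential ingredient is that $\Md$ maps $L^1$ into $L^{1,\infty}$; your argument is exactly the standard proof from those references and it is correct — the localisation to $L^1$, the oscillation functional $\Omega$, the split $f=g+h$ with $g$ continuous, the bound $\Omega h\le\Md h+|h|$, and the deduction of $|f|\le\Md f$ a.e.\ by applying the result to $|f|$ are all sound, and you correctly isolate the weak $(1,1)$ maximal inequality as the one nontrivial input, which is precisely what the paper itself defers. If you wanted the argument fully self-contained you would need to supply the Vitali-type covering proof of $\mu\{\Md h>\lambda\}\le C\lambda^{-1}\|h\|_{L^1}$, and, as a small technical point, you should either check that $\{x:\ \Omega f(x)>\lambda\}$ is measurable or phrase the final step in terms of outer measure (being contained in sets of arbitrarily small measure, it is null in either reading).
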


\begin{proposition}\label{prop:cubes}
  Let $Q$ be any cube in $\R^n$. Given $f\in L^1(Q)$ and
  $$
  M\ge\frac{1}{|Q|}\int_Q|f|
  $$
  there exists a countable collection $\{Q_j\}$ of disjoint open cubes such that $|f(x)|\le M$ for almost every $x\in Q\setminus\bigcup_jQ_j$ and
  \be{upanddown}
  M<\frac{1}{|Q_j|}\int_{Q_j}|f(x)|\,\d x\le 2^nM
  \ee
  for every $Q_j$.
\end{proposition}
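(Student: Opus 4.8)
The plan is to carry out the classical Calderón--Zygmund stopping-time argument by repeated bisection of the cube $Q$. First I would subdivide $Q$ into $2^n$ congruent subcubes by halving each side. For each such subcube $Q'$, compute the average $\frac{1}{|Q'|}\int_{Q'}|f|$; if this average exceeds $M$, stop and place $Q'$ (its open interior) into the selected collection $\{Q_j\}$; otherwise, subdivide $Q'$ again and repeat. This produces a countable family of disjoint open cubes, and the main points to verify are the two assertions in the statement.

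For the upper bound in (\ref{upanddown}): if $Q_j$ is selected, its immediate ``parent'' $\tilde Q_j$ (the cube one generation up in the bisection tree) was \emph{not} selected, so $\frac{1}{|\tilde Q_j|}\int_{\tilde Q_j}|f|\le M$. Since $|\tilde Q_j|=2^n|Q_j|$ and $Q_j\subset\tilde Q_j$, we get
$$
\frac{1}{|Q_j|}\int_{Q_j}|f|\le\frac{2^n}{|\tilde Q_j|}\int_{\tilde Q_j}|f|\le 2^nM,
$$
while the lower bound $M<\frac{1}{|Q_j|}\int_{Q_j}|f|$ is exactly the selection criterion. The hypothesis $M\ge\frac{1}{|Q|}\int_Q|f|$ is what guarantees that $Q$ itself is never selected, so every selected cube is a proper subcube and has a parent inside $Q$; this is the only place that hypothesis is used.

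For the almost-everywhere bound on $Q\setminus\bigcup_jQ_j$: if $x$ lies in this set and is not on the boundary of any dyadic subcube (a measure-zero exceptional set), then $x$ lies in an infinite nested decreasing sequence of subcubes $Q\supset Q^{(1)}\supset Q^{(2)}\supset\cdots$, none of which was ever selected, hence each satisfies $\frac{1}{|Q^{(k)}|}\int_{Q^{(k)}}|f|\le M$. Since $|Q^{(k)}|\to 0$, the Lebesgue Differentiation Theorem (Theorem \ref{LDT}) gives $|f(x)|=\lim_{k\to\infty}\frac{1}{|Q^{(k)}|}\int_{Q^{(k)}}|f|\le M$ for almost every such $x$. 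Combining the two exceptional sets (dyadic boundaries and the Lebesgue-null set from Theorem \ref{LDT}) still has measure zero, which completes the argument.

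The step I expect to require the most care is not any single estimate but the bookkeeping: making sure the selected cubes are genuinely disjoint (they are, since once a cube is selected we do not subdivide it, and two cubes at different generations are either nested or disjoint, while a selected cube is never nested inside another selected one), and making sure every point of $Q\setminus\bigcup_jQ_j$ that avoids the dyadic grid really does sit in an infinite chain of unselected cubes. Everything else is the two-line averaging computation above plus a direct appeal to Theorem \ref{LDT}.
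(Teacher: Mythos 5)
Your proposal is correct and follows essentially the same route as the paper: repeated dyadic bisection with the stopping rule that a subcube is selected when its average of $|f|$ first exceeds $M$, the parent-cube comparison giving the factor $2^n$, and the Lebesgue Differentiation Theorem handling almost every point outside the selected cubes. Your extra remarks on disjointness and the null set of dyadic boundaries are details the paper leaves implicit, but the argument is identical in substance.
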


Note that it follows from (\ref{upanddown}) that
\be{sum2int}
\sum_j|Q_j|\le\frac{1}{M}\int_Q|f|.
\ee

\begin{proof}
  Decompose $Q$ by halving each side into a collection ${\mathscr Q}_0$ of $2^n$ equal cubes.
Select one of these cubes $\hat Q$ if
  \begin{equation}\label{badun}
  \frac{1}{|\hat Q|}\int_{\hat Q}|f(x)|\,\d x>M.
  \end{equation}
  Call the selected cubes ${\mathscr C}_1$ and let ${\mathscr Q}_1={\mathscr Q}_0\setminus{\mathscr C}_1$.

  Repeat this process inductively, to produce a set ${\mathscr C}=\bigcup_j{\mathscr C}_j$ of selected cubes, on which (\ref{badun}) holds. Note that if $\hat Q$ was selected at step $k$ then it is contained in a cube $Q'\in{\mathscr Q}_{k-1}$, and so
  $$
  M<\frac{1}{|\hat Q|}\int_{\hat Q}|f(x)|\,\d x\le 2^n\frac{1}{|Q'|}\int_{Q'}|f(x)|\,\d x\le 2^nM.
  $$

  Finally, if $x\in Q\setminus\bigcup_jQ_j$ then there exists a sequences of cubes $Q_k$ containing $x$ with sides shrinking to zero and such that
  $$
  \frac{1}{|Q_k|}\int_{Q_k}|f(x)|\,\d x\le M.
  $$
  It follows from the Lebesgue Differentiation Theorem that $|f(x)|\le M$ for almost every $x\in Q\setminus\bigcup_jQ_j$.
\end{proof}

\begin{lemma}[John--Nirenberg inequality]\label{JN}
There exist constants $c$ and $C$ (depending only on $n$) such that if $f\in\BMO(\R^n)$ then for any cube $Q\subset\R^n$
\be{expJN}
|\{x\in Q:\ |f-f_Q|>\alpha\}|\le \frac{C}{\|f\|_\BMO}\e^{-c\alpha/\|f\|_\BMO}\int_Q|f-f_Q|
\ee
for all $\alpha\ge\|f\|_\BMO$.
\end{lemma}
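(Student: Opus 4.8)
The plan is to run the classical Calder\'on--Zygmund iteration, using Proposition~\ref{prop:cubes} repeatedly to control the superlevel sets of $f-f_Q$ on ever-finer families of cubes, and to extract exponential decay from the resulting geometric recursion. By homogeneity (replacing $f$ by $f/\|f\|_\BMO$) we may assume $\|f\|_\BMO=1$, so that we must show $|\{x\in Q:\ |f-f_Q|>\alpha\}|\le C\e^{-c\alpha}\int_Q|f-f_Q|$ for all $\alpha\ge1$. Fix the cube $Q$ and write $g=f-f_Q$; note $\frac{1}{|Q|}\int_Q|g|\le\|f\|_\BMO=1$, so the hypothesis of Proposition~\ref{prop:cubes} is met for any $M\ge1$.

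First I would apply Proposition~\ref{prop:cubes} to $g$ on $Q$ with the choice $M=2^n$ (any fixed constant $>1$ works; this choice keeps constants clean). This produces disjoint open cubes $\{Q_j^{(1)}\}$ with $2^n<\frac{1}{|Q_j^{(1)}|}\int_{Q_j^{(1)}}|g|\le 2^{2n}$, with $|g|\le 2^n$ a.e.\ off $\bigcup_j Q_j^{(1)}$, and with $\sum_j|Q_j^{(1)}|\le 2^{-n}\int_Q|g|=:2^{-n}I$ by (\ref{sum2int}). The key estimate is that on each $Q_j^{(1)}$ the average of $g$ satisfies $|g_{Q_j^{(1)}}|\le 2^{2n}$, so $|g|>2\cdot2^{2n}$ on $Q_j^{(1)}$ forces $|g-g_{Q_j^{(1)}}|>2^{2n}$. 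Hence I would iterate: apply Proposition~\ref{prop:cubes} to $g-g_{Q_j^{(1)}}$ inside each $Q_j^{(1)}$ (its average of absolute value over $Q_j^{(1)}$ is at most $\|f\|_\BMO=1\le 2^n$, again by definition of BMO applied to the cube $Q_j^{(1)}$), obtaining a next generation of cubes $\{Q_k^{(2)}\}$ with $\sum_k|Q_k^{(2)}\cap Q_j^{(1)}|\le 2^{-n}|Q_j^{(1)}|$, hence $\sum_k|Q_k^{(2)}|\le 2^{-2n}I$. Off these cubes $|g-g_{Q_j^{(1)}}|\le 2^n$, so $|g|\le 2^n+2^{2n}\le 2^{2n+1}$ there. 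Continuing, after $N$ steps one has a family $\{Q_m^{(N)}\}$ with $\sum_m|Q_m^{(N)}|\le 2^{-Nn}I$, and off $\bigcup_m Q_m^{(N)}$ one has $|g|\le C_n N$ for a dimensional constant (the telescoping sum of the averages, each bounded by $2^{2n}$, gives $|g|\lesssim_n N$ a.e.\ outside the $N$th generation). Therefore $\{x\in Q:\ |g(x)|>C_n N\}\subset\bigcup_m Q_m^{(N)}$ up to null sets, so $|\{|g|>C_n N\}|\le 2^{-Nn}I$.

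To conclude, given $\alpha\ge1$ choose $N=N(\alpha)=\lfloor \alpha/C_n\rfloor$ so that $C_nN\le\alpha<C_n(N+1)$; then $\{|g|>\alpha\}\subset\{|g|>C_nN\}$ and $|\{|g|>\alpha\}|\le 2^{-Nn}I\le 2^{n}\,2^{-\alpha n/C_n}\,I$, which is the claimed bound with $c=n(\ln2)/C_n$ and a suitable absolute $C$ (one must also handle $1\le\alpha<C_n$ trivially, using $|\{|g|>\alpha\}|\le|Q|$ and comparing with $\int_Q|g|$ via the definition of BMO and the lower bound $\alpha\ge1$; absorbing constants fixes this range). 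Unwinding $\|f\|_\BMO=1$ restores the general statement (\ref{expJN}).

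The main obstacle I anticipate is the bookkeeping of the iteration: one must check carefully that Proposition~\ref{prop:cubes} applies at each stage (the hypothesis $M\ge\frac{1}{|Q_j|}\int_{Q_j}|f-f_{Q_j}|$ holds with $M=2^n$ precisely because $\frac{1}{|Q_j|}\int_{Q_j}|f-f_{Q_j}|\le\|f\|_\BMO=1$ for \emph{every} cube, including the subdivision cubes), that the new cubes are contained in the old ones so the measure bounds multiply to give $\sum|Q_m^{(N)}|\le 2^{-Nn}I$, and that the pointwise bound $|g|\lesssim_n N$ off the $N$th generation really does telescope correctly through the nested averages. None of these steps is deep, but getting the constants to line up so that the exponential rate $c$ and the prefactor are clean requires care; everything else is a direct consequence of the Calder\'on--Zygmund decomposition already established.
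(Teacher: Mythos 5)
Your argument is correct and is essentially the paper's own proof: both normalise $\|f\|_\BMO=1$ and iterate the Calder\'on--Zygmund decomposition of Proposition \ref{prop:cubes}, paying an additive increase of $2^nM$ in the height for a multiplicative gain of $1/M$ in measure --- the paper merely packages this iteration as the recursion $F(\alpha)\le M^{-1}F(\alpha-2^nM)$ for the extremal constant (run with $M=\e$) instead of your explicit generations of stopping cubes (run with $M=2^n$), and the telescoping of the averages $|g_{Q^{(j)}}|$ that you describe is exactly what that recursion encodes. The one detail to repair is the range $1\le\alpha<C_n$: bounding $|\{x\in Q:\ |g|>\alpha\}|\le|Q|$ and ``comparing with $\int_Q|g|$ via the definition of BMO'' goes the wrong way (BMO only gives $\int_Q|g|\le|Q|$), but Chebyshev, $|\{x\in Q:\ |g|>\alpha\}|\le\alpha^{-1}\int_Q|g|\le\int_Q|g|$ for $\alpha\ge1$, is precisely the bound $F(\alpha)\le1/\alpha$ from which the paper starts, and it closes this case after adjusting $C$.
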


\begin{proof}
We prove the result assuming that $\|f\|_\BMO=1$; we then obtain (\ref{expJN}) by applying the resulting inequality to $f/\|f\|_\BMO$. Let $F(\alpha)$ be the minimum number such that the inequality
\be{ineq4JN}
|\{x\in Q:\ |f(x)|>\alpha\}|\le F(\alpha)\int_Q|f|
\ee
holds for all $f\in L^1(Q)$ and all cubes $Q$; note (cf.\ Lemma \ref{Lp2Lpw}) that $F(\alpha)\le 1/\alpha$.

Following the original proof of John \& Nirenberg \cite{JN} we show that for all $\alpha\ge 2^n$,
\be{2iterate}
F(\alpha)\le\frac{1}{M}F(\alpha-2^nM)\qquad\mbox{for all}\quad 1\le M\le 2^{-n}\alpha.
\ee
Given $M$ in this range we decompose $f$ using Proposition \ref{prop:cubes}. Now, if $|f(x)|>\alpha\ge 2^n$ then $x\in Q_k$ for some $k$, and we know that $|f_{Q_k}|\le 2^nM$ from (\ref{upanddown}). So then
$$
|\{x\in Q:\ |f(x)|>\alpha\}|\le\sum_k|\{x\in Q_k:\ |f(x)-f_{Q_k}|>\alpha-2^nM\}|.
$$
We can now use (\ref{ineq4JN}) on the cube $Q_k$ for the function $f-f_{Q_k}$, so that
\begin{align*}
|\{x\in Q_k:\ |f(x)-f_{Q_k}|>\alpha-2^nM\}|&\le F(\alpha-2^nM)\int_{Q_k}|f-f_{Q_k}|\,\d x\\
&\le F(\alpha-2^nM)|Q_k|
\end{align*}
(recall that we took $\|f\|_\BMO=1$). It follows using (\ref{sum2int}) that
$$
|\{x\in Q:\ |f(x)|>\alpha\}\le\left(\sum_k|Q_k|\right)F(\alpha-2^nM)\le \frac{1}{M} F(\alpha-2^nM)\int_Q|f|\,\d x,
$$
which is (\ref{2iterate}).

To finish the proof we iterate (\ref{2iterate}) in a suitable way. We remarked above that $F(\alpha)\le1/\alpha$; now observe that
$$
\frac{1}{\alpha}\le C\e^{-\alpha/2^n\e}\qquad 1\le\alpha\le1+2^n\e,
$$
for $C=\max_{1\le\alpha\le1+2^n\e}\alpha^{-1}\e^{\alpha/2^n\e}$. Iterating (\ref{2iterate}) with $M=\e$, which implies that $F(\alpha+2^n\e)\le\frac{1}{\e}F(\alpha)$, we obtain
$$
F(\alpha)\le C\e^{-c\alpha}\qquad\mbox{for all}\quad \alpha\ge 1,
$$
where $c=1/2^n\e$, which gives (\ref{expJN}).
\end{proof}

The more usually quoted form of this inequality,
$$
|\{x\in Q:\ |f-f_Q|>\alpha\}|\le C|Q|\e^{-c\alpha/\|f\|_\BMO},
$$
follows immediately from the definition of $\|f\|_\BMO$.

 \section{Generalised Gagliardo--Nirenberg inequality II}\label{sec:GGN2}

 We now adapt the very elegant argument of Chen \& Zhu \cite{CZ}  to prove the following stronger version of the inequality in (\ref{interpolated}) in the case $s=n/2$; they proved the inequality for $f\in L^q\cap\BMO$, but the changes required to take $f\in L^{q,\infty}\cap\BMO$ are in fact straightforward. Another proof for $f\in L^q\cap\BMO$, which still relies on the John--Nirenberg inequality (but less explicitly), is given by Azzam \& Bedrossian \cite{AB}, and a sketch of an alternative proof of the result for $f\in L^{q,\infty}\cap\BMO$ can be found in the paper by Kozono et al.\ \cite{KMW} (see also the discussion in Section \ref{sec:ispaces}, below).

 \begin{theorem}\label{thm:BMOineq}
   For any $1\le q<p<\infty$, if  $f\in L^{q,\infty}(\R^n)\cap\BMO(\R^n)$ then $f\in L^p(\R^n)$ and there exists a constant $C=C(q,p,n)$ such that
   \begin{equation}\label{BMOineq}
   \|f\|_{L^p}\le C\|f\|_{L^{q,\infty}}^{q/p}\|f\|_\BMO^{1-q/p}.
   \end{equation}
 \end{theorem}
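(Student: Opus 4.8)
The plan is to adapt the argument of Chen \& Zhu \cite{CZ}, working with the distribution function $d_f$ of $f$: the hypothesis $f\in L^{q,\infty}$ controls $d_f(\alpha)$ for small $\alpha$, and a Calder\'on--Zygmund decomposition together with the John--Nirenberg inequality upgrades this to exponential decay for large $\alpha$. By the scaling of (\ref{BMOineq}) we may normalise so that $\|f\|_\BMO=1$ (replace $f$ by $f/\|f\|_\BMO$); writing $A:=\|f\|_{L^{q,\infty}}$, the claim becomes $\|f\|_{L^p}\le cA^{q/p}$. By (\ref{df2Lp}) we have $\|f\|_{L^p}^p=p\int_0^\infty\alpha^{p-1}d_f(\alpha)\,\d\alpha$, and we split the integral at a dimensional constant, say $\alpha_0=2^{n+1}$. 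On $(0,\alpha_0)$ we use only $d_f(\alpha)\le A^q\alpha^{-q}$ from (\ref{Lpw2df}); since $p>q$, the integral $\int_0^{\alpha_0}\alpha^{p-1-q}\,\d\alpha$ converges and this range contributes at most $c_{n,p,q}A^q$.

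For the tail we show $d_f(\alpha)\le c_{n,q}A^q\alpha^{-q}\e^{-c'\alpha}$ when $\alpha\ge\alpha_0$. Fix such an $\alpha$ and apply a Calder\'on--Zygmund decomposition of $\R^n$ at level $\lambda:=2^{-n-1}\alpha$ (the whole-space analogue of Proposition \ref{prop:cubes}): one obtains pairwise disjoint cubes $\{Q_j\}$ with $\lambda<|Q_j|^{-1}\int_{Q_j}|f|\le2^n\lambda$ for every $j$ and with $|f|\le\lambda$ a.e.\ on $\R^n\setminus\bigcup_jQ_j$. Since $\lambda<\alpha$ we have $\{|f|>\alpha\}\subseteq\bigcup_jQ_j$ up to a null set, while $|f_{Q_j}|\le|Q_j|^{-1}\int_{Q_j}|f|\le2^n\lambda=\alpha/2$ on each $Q_j$, so that
\[
\{x\in Q_j:\ |f(x)|>\alpha\}\subseteq\{x\in Q_j:\ |f(x)-f_{Q_j}|>\alpha/2\}.
\]
As $\alpha/2\ge2^n\ge1=\|f\|_\BMO$, the John--Nirenberg inequality (Lemma \ref{JN}, in the form $|\{x\in Q:\ |f-f_Q|>\beta\}|\le C|Q|\e^{-c\beta/\|f\|_\BMO}$ noted after its proof) bounds each set on the right by $c|Q_j|\e^{-c'\alpha}$, and summing over the disjoint $Q_j$ gives $d_f(\alpha)\le c\,\e^{-c'\alpha}\sum_j|Q_j|$.

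It remains to estimate $\sum_j|Q_j|$, and here the $L^{q,\infty}$ hypothesis re-enters. Each $Q_j$ has $\int_{Q_j}|f|>\lambda|Q_j|$ while $\int_{Q_j\cap\{|f|\le\lambda/2\}}|f|\le\frac\lambda2|Q_j|$, so $\int_{Q_j\cap\{|f|>\lambda/2\}}|f|>\frac\lambda2|Q_j|$; summing over the disjoint $Q_j$,
\[
\sum_j|Q_j|\le\frac2\lambda\int_{\{|f|>\lambda/2\}}|f|=\frac2\lambda\Bigl(\frac{\lambda}{2}\,d_f(\lambda/2)+\int_{\lambda/2}^\infty d_f(t)\,\d t\Bigr)\le c_{n,q}A^q\alpha^{-q},
\]
the last step using $d_f(t)\le A^qt^{-q}$ and $\int_{\lambda/2}^\infty t^{-q}\,\d t<\infty$ (valid since $q>1$). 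Hence $d_f(\alpha)\le c_{n,q}A^q\alpha^{-q}\e^{-c'\alpha}$ for $\alpha\ge\alpha_0$, and integrating against $p\alpha^{p-1}$ over $(\alpha_0,\infty)$ yields at most $c_{n,p,q}A^q$ because $\int_0^\infty\alpha^{p-1-q}\e^{-c'\alpha}\,\d\alpha<\infty$ (again using $p>q$). Combining the two ranges gives $\|f\|_{L^p}^p\le c_{n,p,q}A^q$, which is (\ref{BMOineq}) after undoing the normalisation. The only delicate point --- the ``straightforward'' passage from $L^q$ to $L^{q,\infty}$ referred to in the text --- is the endpoint $q=1$, where $\int_{\lambda/2}^\infty d_f(t)\,\d t$ is no longer dominated by $A\int_{\lambda/2}^\infty t^{-1}\,\d t$; there one argues instead that $\int_{\{|f|>\lambda/2\}}|f|$ is finite and of the correct order in $\lambda$, using that $f\in L^1_{\rm loc}$ (since $f\in\BMO$) together with the local exponential integrability of $\BMO$ functions supplied by John--Nirenberg. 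Apart from this, the whole argument is insensitive to weakening $L^q$ to $L^{q,\infty}$.
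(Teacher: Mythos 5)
Your argument is correct for $1<q<p<\infty$, but it takes a genuinely different route from the paper. You estimate $d_f(\alpha)$ directly for each large $\alpha$ by running a Calder\'on--Zygmund decomposition at level $\lambda\sim\alpha$, applying John--Nirenberg on each selected cube, and paying for $\sum_j|Q_j|$ with the weak-$L^q$ norm --- the classical ``good-$\lambda$''-style proof. The paper instead makes a single splitting $f=f_{1-}+f_{1+}$ at height $1$ (Lemma \ref{splitg}), observes $\|f_{1+}\|_\BMO\le3$, uses the global consequence (\ref{JN-BMO}) of John--Nirenberg (obtained by letting $|Q|\to\infty$ in (\ref{expJN})) to get exponential decay of $d_{f_{1+}}$ in terms of $\|f_{1+}\|_{L^1}$, and then controls $\|f_{1+}\|_{L^1}$ by $\|f_{1+}\|_{L^{q,\infty}}^q$ via weak-Lebesgue interpolation. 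Your version avoids Lemmas \ref{Lpinterp} and \ref{splitg} but needs the whole-space Calder\'on--Zygmund decomposition, which Proposition \ref{prop:cubes} does not quite supply: you must verify that the averages of $|f|$ over the cubes of a sufficiently coarse initial mesh are $\le\lambda$. For $q>1$ this is fine, since the layer-cake formula gives $\int_Q|f|\le c\|f\|_{L^{q,\infty}}|Q|^{1-1/q}$, so these averages tend to $0$ as $|Q|\to\infty$; you should say so explicitly.

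The endpoint $q=1$, however, is a genuine gap, and your proposed repair does not close it. The bound $\sum_j|Q_j|\le\frac{2}{\lambda}\int_{\{|f|>\lambda/2\}}|f|$ is only useful if $\int_{\{|f|>\mu\}}|f|=\mu d_f(\mu)+\int_\mu^\infty d_f(t)\,\d t$ is finite and of order $\|f\|_{L^{1,\infty}}$; but the tail $\int_\mu^\infty d_f(t)\,\d t$ is precisely the quantity whose finiteness the theorem is designed to establish (it is controlled by $\|f\|_{L^p}^p$ once the theorem is known), so invoking it is circular. ``Local exponential integrability'' from John--Nirenberg bounds $\int_Q\e^{c|f-f_Q|}$ on a fixed cube but says nothing about $\{|f|>\mu\}$, which may be spread over infinitely many cubes, and $f\in L^{1,\infty}$ alone gives no bound whatsoever on $\int_{\{|f|>\mu\}}|f|$. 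To close this you would need, for instance, a truncation-and-bootstrap argument: apply your estimate to $f_N=f\chi_{|f|\le N}$, for which the tail integral is finite, and close a self-improving inequality uniformly in $N$. It is worth observing that your obstruction is literally the same quantity as the paper's, since $\int_{\{|f|>\mu\}}|f|=\|f_{\mu+}\|_{L^1}$ in the notation of Lemma \ref{splitg}, and the paper's interpolation step (justified by ``$1<1+1/q'<q$'') is likewise vacuous when $q=1$; so the published proof also requires extra care at this endpoint. For $1<q<p<\infty$ your proof is complete and correct.
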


\begin{proof}
First we note that it is a consequence of the John--Nirenberg inequality from Lemma \ref{JN} that if $f\in\BMO\cap L^1$ then
\begin{equation}\label{JN-BMO}
d_f(\alpha)\le C\e^{-C\alpha/\|f\|_\BMO}\|f\|_{L^1}
\end{equation}
for all $\alpha>\|f\|_\BMO$; this follows by taking $|Q|\to\infty$ in (\ref{expJN}), since when $f\in L^1$,
 $$
 |f_Q|=\frac{1}{|Q|}\int|f|\to0\qquad\mbox{as}\quad|Q|\to\infty,
 $$
 and $\int_Q|f-f_Q|\,\d x\le 2\int_Q|f|\,\d x$.

Now take $f\in\BMO$ with $\|f\|_\BMO=1$. Split $f=f_{1-}+f_{1+}$ as in Lemma \ref{splitg}. Since $f_{1-}\in L^\infty$, $\|f_{1-}\|_\BMO\le 2\|f_{1-}\|_{L^\infty}\le 2$ (using (\ref{Linf2BMO})); thus $f_{1+}=f-f_{1-}\in\BMO$ and
$$
\|f_{1+}\|_\BMO\le\|f\|_\BMO+\|f_{1-}\|_\BMO\le 3.
$$

Using Lemma \ref{splitg} we know that
\begin{equation}\label{f1-}
\|f_{1-}\|_{L^p}^p\le C\|f_{1-}\|_{L^{q,\infty}}^q.
\end{equation}
Also, for $(q,q')$ conjugate,
$$
\|f_{1+}\|_{L^1}=\int|f_{1+}|\le\int|f_{1+}|^{1+1/q'}=\|f_{1+}\|_{L^{1+1/q'}}^{1+1/q'}\le c\|f_{1+}\|_{L^1}^{1/q'}\|f_{1+}\|_{L^{q,\infty}}
$$
(since $1<1+1/q'<q$ we can use weak-$L^p$ interpolation), which yields
$$
\|f_{1+}\|_{L^1}\le c\|f_{1+}\|_{L^{q,\infty}}^q.
$$
Now we calculate
\begin{align*}
\|f_{1+}\|_{L^p}^p&=p\int_0^\infty\alpha^{p-1}d_{f_{1+}}(\alpha)\,\d\alpha\\
&=p\int_0^1\alpha^{p-1}d_f(1)\,\d\alpha+p\int_1^\infty\alpha^{p-1}d_{f_{1+}}(\alpha)\,\d\alpha\\
&\le d_f(1)+p\left(\int_1^\infty\alpha^{p-1}C\e^{-C\alpha/3}\,\d\alpha\right)\|f_{1+}\|_{L^1},
\end{align*}
where we have used (\ref{dg+}), (\ref{JN-BMO}), and the fact that $\|f_{1+}\|_\BMO\le3$. Thus
\begin{equation}\label{f1+}
\|f_{1+}\|_{L^p}^p\le \|f\|_{L^{q,\infty}}^q+C\|f_{1+}\|_{L^{q,\infty}}^q\le C\|f\|_{L^{q,\infty}}^q.
\end{equation}

Adding (\ref{f1-})$^{1/p}$ and (\ref{f1+})$^{1/p}$ we obtain
$$
\|f\|_{L^p}\le C\|f\|_{L^{q,\infty}}^{q/p};
$$
(\ref{BMOineq}) follows.\end{proof}

\section{The interpolation space approach}\label{sec:ispaces}

So far we have avoided defining the two-parameter Lorentz spaces $L^{p,r}$, which involve decreasing rearrangements. In this final section we will obtain an inequality involving such spaces
\be{itsLorentz}
\|u\|_{L^{p,1}}\le C_{n,p,q}\|u\|_{L^{q,\infty}}^{q/p}\|u\|_\BMO^{1-q/p},
\ee
from which (at least for $q>1$) our two previous inequalities follow (we require $1<q<p<\infty$ in (\ref{itsLorentz}), see Theorem \ref{GNLorentz}). We will do this via the theory of interpolation spaces. Here we will not provide detailed proofs of any of the results, for the most part merely providing statements of the relevant general theory.

\subsection{Lorentz spaces}

Given a measurable function $f\:\R^n\rightarrow\R$, we have already defined and made much use of its distribution function $d_f$. We now define its decreasing rearrangement $f^*:[0,\infty)\to[0,\infty]$ as
$$
f^*(t)=\inf\{\alpha:\ d_f(\alpha)\le t\},
$$
with the convention that $\inf\varnothing=\infty$. The point of this definition is that $f$ and $f^*$ have the same distribution function,
$$
d_{f^*}(\alpha)=d_f(\alpha),
$$
but $f^*$ is a positive non-increasing scalar function. Since their distribution functions agree, we can use the identity in (\ref{df2Lp}) to show that the $L^p$ norm of $f$ is equal to the $L^p$ norm of $f^*$:
\begin{align*}
\int_{\R^n}|f(x)|^p\,\d x&=p\int_0^\infty \alpha^{p-1}d_f(\alpha)\,\d\alpha\\
&=p\int_0^\infty\alpha^{p-1}d_{f^*}(\alpha)\,\d\alpha=\int_0^\infty f^*(\alpha)^p\,\d\alpha.
\end{align*}

Given $1\le p,q\le\infty$, the Lorentz space $L^{p,q}(\R^n)$ consists of all measurable functions $f$ for which the quantity
$$
\|f\|_{L^{p,q}}:=\left(\int_0^\infty[t^{1/p}f^*(t)]^q\frac{\d t}{t}\right)^{1/q}
$$
(for $q<\infty$) or
$$
\|f\|_{L^{p,\infty}}:=\sup_{0<t<\infty}t^{1/p}f^*(t)
$$
(for $q=\infty$) is finite. It is simple to show (see Proposition 1.4.5 in Grafakos \cite{Grafakos}) that this definition agrees with our previous definition of $L^{p,\infty}$, that $L^{\infty,\infty}=L^\infty$, and that $L^{p,p}=L^p$ (the last of these, at least, is immediate).

If $r<s$ then $L^{p,r}\subset L^{p,s}$; so the largest space in this family for fixed $p$ is the weak space $L^{p,\infty}$, and the smallest is $L^{p,1}$. To see that $L^{p,r}\subset L^{p,\infty}$ for every $r$, simply observe that
\begin{align*}
t^{1/p}f^*(t)&=\left\{\frac{r}{p}\int_0^t[s^{1/p}f^*(t)]^r\frac{\d s}{s}\right\}^{1/r}\\
&\le \left\{\frac{r}{p}\int_0^t[s^{1/p}f^*(s)]^r\frac{\d s}{s}\right\}^{1/r}\\
&\le(r/p)^{1/r}\|f\|_{L^{p,r}},
\end{align*}
which yields $\|f\|_{L^{p,\infty}}\le (r/p)^{1/r}\|f\|_{L^{p,r}}$ on taking the supremum over $t>0$. Given this, if $r<q<\infty$ then, using H\"older's inequality,
$$
\|f\|_{L^{p,q}}=\left\{\int_0^t[t^{1/p}f^*(t)]^{q-r+r}\frac{\d t}{t}\right\}^{1/r}\le\|f\|_{L^{p,\infty}}^{(q-r)/q}\|f\|_{L^{p,r}}^{r/q}\le C_{p,q,r}\|f\|_{L^{p,r}}.
$$

\subsection{Interpolation spaces}

We now very briefly outline the theory of interpolation spaces; the general theory is modelled on the definition of the Lorentz spaces given above. For sustained expositions of the theory see Bennett \& Sharpley \cite{BS}, Bergh \& L\"ofstr\"om \cite{BL}, or Lundari \cite{Lunardi}.

Given two Banach spaces $X_0$ and $X_1$ that embed continuously into some parent Hausdorff topological vector space, which we term ``a compatible pair", we define the $K$-functional for each $x\in X_0+X_1$ and $t>0$ by
$$
K(x,t)=\inf\{\|x_0\|_{X_0}+t\|x_1\|_{X_1}:\ x_0+x_1=x,\ x_0\in X_0,\ x_1\in X_1\}.
$$
Then for $0<\theta<1$ and $1\le q<\infty$ we define the interpolation space $(X_0,X_1)_{\theta,q}$ as the space of all $x\in X_0+X_1$ for which
$$
\|x\|_{\theta,q}:=\left(\int_0^\infty[t^{-\theta}K(f,t)]^q\frac{\d t}{t}\right)^{1/q}
$$
is finite. Similarly, for $0\le\theta\le 1$ and $q=\infty$, the space $(X_0,X_1)_{\theta,\infty}$ is the space of all $x\in X_0+X_1$ such that
$$
\|x\|_{\theta,\infty}=\sup_{0<t<\infty}t^{-\theta}K(f,t)
$$
is finite. For all these spaces ($1\le q\le\infty$) we have the interpolation inequality
\begin{equation}\label{norm4tq}
\|f\|_{\theta,q}\le C_{\theta,q}\|f\|_{X_0}^{1-\theta}\|f\|_{X_1}^\theta
\end{equation}
(see Section 3.5 in Bergh \& L\"ofstr\"om \cite{BL}, for example).

Given the definitions of Lorentz spaces and of the interpolation spaces, it is not surprising that
$$
(L^1,L^\infty)_{1-1/p,r}=L^{p,r}
$$
for $0<\theta<1$, $1\le r\le\infty$. That one can replace $L^\infty$ here by BMO is much less obvious, but key to the `quick' proof of (\ref{itsLorentz}) that we give in this section.

\begin{theorem}[Bennett \& Sharpley]\label{BSBMO}
For $1<p<\infty$ and $1\le r\le\infty$,
$$
L^{p,r}=(L^1,\BMO)_{1-1/p,r}.
$$
\end{theorem}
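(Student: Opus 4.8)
The plan is to establish the two inclusions $(L^1,\BMO)_{1-1/p,r}\subset L^{p,r}$ and $L^{p,r}\subset(L^1,\BMO)_{1-1/p,r}$ separately, and in both cases the key is to identify the $K$-functional $K(f,t;L^1,\BMO)$ up to constants. The natural guess, by analogy with the classical fact that $K(f,t;L^1,L^\infty)\simeq\int_0^t f^*(s)\,\d s$, is that
$$
K(f,t;L^1,\BMO)\ \simeq\ \int_0^t f^{**}(s)-f^*(s)\,\d s \qquad\text{is \emph{not} quite right};
$$
the correct statement (the Bennett--DeVore--Sharpley identification) is that $K(f,t;L^1,\BMO)$ is comparable to $t$ times the ``sharp maximal'' rearrangement, or equivalently that for the purposes of the interpolation norm one has $t^{-(1-1/p)}K(f,t)\simeq t^{1/p}f^*(t)$ after integration against $\d t/t$. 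Rather than reproduce that machinery, since the section explicitly says we will ``not provide detailed proofs,'' I would cite Bennett \& Sharpley \cite{BS} for the hard direction and only sketch the easy one.

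For the easy inclusion $L^{p,r}\subset(L^1,\BMO)_{1-1/p,r}$: given $f\in L^{p,r}$ and $t>0$, decompose $f=f_{M-}+f_{M+}$ at height $M=f^*(t)$ as in Lemma \ref{splitg} (this is exactly the truncation used throughout the paper). Then $f_{M-}\in L^\infty$ with $\|f_{M-}\|_{L^\infty}\le f^*(t)$, so by \eqref{Linf2BMO} we get $\|f_{M-}\|_\BMO\le 2f^*(t)$, while $f_{M+}\in L^1$ with $\|f_{M+}\|_{L^1}=\int_0^t(f^*(s)-f^*(t))\,\d s\le\int_0^t f^*(s)\,\d s$. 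Hence
$$
K(f,t;L^1,\BMO)\le \|f_{M+}\|_{L^1}+2t\,f^*(t)\le \int_0^t f^*(s)\,\d s + 2t f^*(t)\le 3\int_0^t f^*(s)\,\d s.
$$
Plugging this into the definition of the interpolation norm and using Hardy's inequality (the standard estimate $\int_0^\infty[t^{-\theta}\int_0^t g(s)\,\d s]^r\,\d t/t\le C\int_0^\infty[t^{1-\theta}g(t)]^r\,\d t/t$ with $\theta=1-1/p$ and $g=f^*$) gives $\|f\|_{1-1/p,r}\le C\|f\|_{L^{p,r}}$, which is one direction.

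For the reverse inclusion one needs a \emph{lower} bound $K(f,t;L^1,\BMO)\gtrsim t f^*(ct)$ or, more precisely, the John--Nirenberg inequality (Lemma \ref{JN}) enters here: if $f=f_0+f_1$ with $f_0\in L^1$, $f_1\in\BMO$, then the exponential decay of the distribution function of $f_1$ on each cube forces $f_1$ to contribute essentially like $\|f_1\|_\BMO$ to $f^{**}-f^*$, so one recovers control of $t^{1/p}f^*(t)$ from $t^{-(1-1/p)}K(f,t)$. This is the step I expect to be the main obstacle, and it is precisely the content of the Bennett--Sharpley theorem; I would state it, indicate that it rests on John--Nirenberg, and refer to \cite[Chapter~5]{BS} for the details rather than reconstructing the rearrangement-theoretic argument here. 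With the $K$-functional pinned down on both sides, Theorem \ref{BSBMO} follows by comparing the two integral expressions, using $L^{p,r}=(L^1,L^\infty)_{1-1/p,r}$ as the model case.
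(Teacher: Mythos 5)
Your proposal matches the paper's approach: the paper's ``proof'' of Theorem \ref{BSBMO} is simply a citation of Bennett \& Sharpley (Chapter 5, Theorem 8.11) together with Hanks and Janson--Jones, which is exactly where you send the hard inclusion, so deferring the rearrangement/John--Nirenberg machinery (the estimate $f^{**}(t)-f^*(t)\le C(f^\sharp_Q)^*(t)$ that the paper itself highlights) is what the paper does too. Your additional sketch of the easy inclusion $L^{p,r}\subset(L^1,\BMO)_{1-1/p,r}$, truncating at height $f^*(t)$ to bound $K(f,t)\le 3\int_0^t f^*(s)\,\d s$ and then applying Hardy's inequality, is correct and goes slightly beyond the paper, which proves neither direction itself.
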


\begin{proof}
See Chapter 5, Theorem 8.11, in Bennett \& Sharpley \cite{BS}. One can also find a proof of this result in the paper by Hanks \cite{Hanks}, and of a similar but slightly weaker result (with $L^p$ on the left-hand side) using complex interpolation spaces  in the paper by Janson \& Jones \cite{JJ}.
\end{proof}

We note here that the key step in the proof of this result given in Bennet \& Sharpley \cite{BS} (and in Hanks \cite{Hanks}) is a relationship between the sharp function of $f$,
$$
f^\sharp_Q(x):=\sup_{Q'\subset Q,\ Q'\ni x}\frac{1}{|Q'|}\int_{Q'}|f-f_{Q'}|,
$$
its decreasing rearrangement $f^*$, and the function $f^{**}(t):=\frac{1}{t}\int_0^tf^*(s)\,\d s$:
$$
f^{**}(t)-f^*(t)\le C(f_Q^\sharp)^*(t)\qquad 0<t<|Q|
$$
(Lemma 7.3 in Chapter 5 of Bennett \& Sharpley \cite{BS}). This also forms the main ingredient in the proof of (\ref{BMOineq}) in Kozono \& Wadade \cite{KW} (and the proof of (\ref{Lorentzone}) in Kozono et al.\ \cite{KMW}).

The inequality (\ref{BMOineq}) in fact follows simply from Theorem \ref{BSBMO} using the following `Reiteration Theorem', which allows one to identify interpolants between two interpolation spaces in terms of the original `endpoints'.

%
%

\begin{theorem}[Reiteration Theorem]\label{reiteration}
Let $(X_0,X_1)$ be a compatible pair of Banach spaces, and let $0\le\theta_0<\theta_1\le 1$ and $1\le q_0,q_1\le\infty$. Set
$$
Y_0=(X_0,X_1)_{\theta_0,q_0}\qquad\mbox{and}\qquad Y_1=(X_0,X_1)_{\theta_1,q_1}.
$$
If $0<\theta<1$ and $1\le q\le\infty$ then
$$
(Y_0,Y_1)_{\theta,q}=(X_0,X_1)_{(1-\theta)\theta_0+\theta\theta_1,q}.
$$
\end{theorem}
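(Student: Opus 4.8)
The plan is to deduce the equality of the two interpolation spaces from an equivalence of $K$-functionals, the bridge being \emph{Holmstedt's formula}, which computes $K(\cdot,t;Y_0,Y_1)$ in terms of weighted integrals of $K(\cdot,s;X_0,X_1)$. Write $\bar X=(X_0,X_1)$, abbreviate $K(x,s)=K(x,s;\bar X)$ and $\bar X_{\theta,q}=(X_0,X_1)_{\theta,q}$, and treat first the generic case $0<\theta_0<\theta_1<1$; put $\lambda=\theta_1-\theta_0>0$ and $\sigma=(1-\theta)\theta_0+\theta\theta_1$, so that $\sigma=\theta_0+\theta\lambda=\theta_1-(1-\theta)\lambda$.

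\emph{Step 1: Holmstedt's formula.} I would first prove that, with $u=t^{1/\lambda}$,
$$
K(x,t;Y_0,Y_1)\simeq\left(\int_0^{u}\bigl[s^{-\theta_0}K(x,s)\bigr]^{q_0}\,\frac{\d s}{s}\right)^{1/q_0}+t\left(\int_{u}^\infty\bigl[s^{-\theta_1}K(x,s)\bigr]^{q_1}\,\frac{\d s}{s}\right)^{1/q_1}=:H(x,t).
$$
The bound $H\lesssim K$ is the softer direction: starting from a near-optimal decomposition $x=y_0+y_1$ realising $K(x,t;Y_0,Y_1)$, one uses $K(x,s)\le K(y_0,s)+K(y_1,s)$ together with the elementary estimates $K(y_j,s)\le Cs^{\theta_j}\|y_j\|_{Y_j}$ (which come from the inclusions $\bar X_{\theta_j,q_j}\subset\bar X_{\theta_j,\infty}$, proved exactly as the inclusion $L^{p,r}\subset L^{p,\infty}$ in Section \ref{sec:ispaces}), and the integrals $\int_0^u s^{\lambda q_0}\,\d s/s$ and $\int_u^\infty s^{-\lambda q_1}\,\d s/s$ converge precisely because $\lambda>0$. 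The reverse bound $K\lesssim H$ is the heart of the matter: one must manufacture a single decomposition $x=y_0+y_1$ with $y_j\in Y_j$ and $\|y_0\|_{Y_0}+t\|y_1\|_{Y_1}\lesssim H(x,t)$. For this I would take near-optimal $\bar X$-decompositions $x=a_k+b_k$ at the dyadic scales $s=2^k u$, build $y_0$ and $y_1$ by telescoping the differences $a_{k+1}-a_k$ over $k\ge0$ and $b_{k+1}-b_k$ over $k<0$, and then estimate the $Y_j$-norms of these series by comparing the resulting dyadic sums with the integrals in $H$.

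\emph{Step 2: change of variables and Hardy's inequalities.} With Holmstedt's formula available, substitute $t=r^\lambda$ in
$$
\|x\|_{(Y_0,Y_1)_{\theta,q}}^q=\int_0^\infty\bigl[t^{-\theta}K(x,t;Y_0,Y_1)\bigr]^q\,\frac{\d t}{t}\simeq\int_0^\infty\bigl[r^{-\theta\lambda}H(x,r^\lambda)\bigr]^q\,\frac{\d r}{r},
$$
and treat the two terms of $H$ separately. The first gives $\int_0^\infty r^{-\theta\lambda q}\bigl(\int_0^r[s^{-\theta_0}K(x,s)]^{q_0}\,\d s/s\bigr)^{q/q_0}\,\d r/r$; the Hardy-type inequality
$$
\int_0^\infty r^{-\beta q}\left(\int_0^r\phi(s)^{q_0}\,\frac{\d s}{s}\right)^{q/q_0}\,\frac{\d r}{r}\ \simeq\ \int_0^\infty\bigl[r^{-\beta}\phi(r)\bigr]^q\,\frac{\d r}{r}\qquad(\beta>0),
$$
applied with $\phi(s)=s^{-\theta_0}K(x,s)$ and $\beta=\theta\lambda>0$, turns it into $\int_0^\infty[r^{-\sigma}K(x,r)]^q\,\d r/r$ because $\theta\lambda+\theta_0=\sigma$. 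The $\lesssim$ half here is classical Hardy; the $\gtrsim$ half uses the quasi-concavity of $s\mapsto K(x,s)$ (namely $K(x,2s)\le 2K(x,s)$) to bound $\phi$ from below on $[r/2,r]$. The second term of $H$ is handled in the same way with the dual Hardy inequality on $\int_r^\infty$, and yields the same integral $\int_0^\infty[r^{-\sigma}K(x,r)]^q\,\d r/r$ since $\theta_1-(1-\theta)\lambda=\sigma$. Adding the two, $\|x\|_{(Y_0,Y_1)_{\theta,q}}\simeq\|x\|_{(X_0,X_1)_{\sigma,q}}$, which is the asserted identity; the case $q=\infty$ is the same computation with suprema replacing integrals, and the endpoint cases $\theta_0=0$ or $\theta_1=1$ (where $(X_0,X_1)_{0,\cdot}$, resp. $(X_0,X_1)_{1,\cdot}$, must be interpreted appropriately) require only the corresponding degenerate form of Holmstedt's formula.

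The step I expect to be the main obstacle is the construction direction of Holmstedt's formula — producing the decomposition of $x$ into $Y_0+Y_1$ by dyadic telescoping and controlling the two series in the $Y_j$-norms, with the bookkeeping needed to pass between dyadic sums and integrals when $q_0$ or $q_1$ differs from $q$. Once that is in place, Step 2 is a routine (if lengthy) application of Hardy's inequalities, the only real point being that the hypotheses $\theta_0<\theta_1$ and $0<\theta<1$ are exactly what make all the relevant weight exponents strictly positive.
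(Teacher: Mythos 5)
The paper does not actually prove this theorem --- its ``proof'' is a pointer to Theorem 2.4 of Chapter 5 in Bennett \& Sharpley and to Theorem 3.5.3 in Bergh \& L\"ofstr\"om --- so there is no internal argument to compare yours against line by line. What you propose is precisely the route taken in the first of those two references: reiteration deduced from Holmstedt's formula, followed by a change of variables and Hardy-type estimates. (Bergh \& L\"ofstr\"om argue differently, via the $J$-method, the classes of intermediate spaces of class $\theta$, and the equivalence theorem, and only prove Holmstedt's formula afterwards; your way avoids introducing the $J$-functional at the cost of the dyadic construction in Step 1, which you correctly identify as the hard part.) Your outline of both directions of Holmstedt and of the easy direction of Step 2 is sound.

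Two caveats. First, the Hardy-type inequality as you state it, for an \emph{arbitrary} nonnegative $\phi$, is false when $q_0>q$: take $q_0=\infty$, $q=1$, $\phi=N\chi_{[1,1+1/N]}$; then the left-hand side is of order $N/\beta$ while the right-hand side is of order $1$. In the application $\phi(s)=s^{-\theta_0}K(x,s)$ this is rescued by the monotonicity of $K(x,\cdot)$ and of $K(x,s)/s$ (equivalently, by a dyadic decomposition of $(0,r)$ in which the factors $2^{-j\theta\lambda}$ supply geometric decay), so you need the quasi-concavity of $K$ in the $\lesssim$ direction of Step~2 as well, not only in the $\gtrsim$ direction as written. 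Second, the endpoint case you defer, $\theta_1=1$ with $q_1=\infty$, is exactly the case invoked in Corollary \ref{GNLorentz} (where $Y_1={\mathfrak B}=(L^1,\BMO)_{1,\infty}$), so the ``degenerate form of Holmstedt's formula'' should not be waved away: one must check that the second term of $H$ becomes $t\sup_{s>u}s^{-1}K(x,s)$ and that the telescoping construction still converges in $Y_1$. Neither point breaks the argument, but both need to be written out for the proof to cover the statement as given.
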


\begin{proof}
See Theorem 2.4 of Chapter 5 in Bennett \& Sharpley \cite{BS}, or Theorem 3.5.3 in Bergh \& L\"ofstr\"om \cite{BL}.
\end{proof}

\begin{corollary}[Generalised Gagliardo--Nirenberg with Lorentz spaces]\label{GNLorentz}

$ $\\ If $u\in L^{q,\infty}\cap\BMO$ for some $q>1$ and $q<p<\infty$, then $u\in L^{p,1}$ and there exists a constant $C_{n,p,q}$ such that
\begin{equation}\label{Lorentzone}
\|u\|_{L^{p,1}}\le C_{n,p,q}\|u\|_{L^{q,\infty}}^{q/p}\|u\|_\BMO^{1-q/p}.
\end{equation}
\end{corollary}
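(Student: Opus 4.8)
The plan is to recognise $L^{p,1}$ as an interpolation space between $L^{q,\infty}$ and $\BMO$ and then read the inequality off from the abstract interpolation inequality (\ref{norm4tq}). The starting point is the pair of identities furnished by Theorem \ref{BSBMO} (legitimate because $1<q<p<\infty$):
$$
L^{q,\infty}=(L^1,\BMO)_{1-1/q,\infty}\qquad\text{and}\qquad L^{p,1}=(L^1,\BMO)_{1-1/p,1}.
$$

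Next I would feed these into the Reiteration Theorem (Theorem \ref{reiteration}), applied to the compatible pair $X_0=L^1$, $X_1=\BMO$ with $\theta_0=1-1/q$ and $q_0=\infty$ (so $Y_0=L^{q,\infty}$), with $\theta_1=1$ and $q_1=\infty$ (so $Y_1=\BMO$), and with outer parameter $\theta=1-q/p\in(0,1)$ and outer second index $1$. Since
$$
(1-\theta)\theta_0+\theta\theta_1=\frac{q}{p}\Big(1-\frac1q\Big)+\Big(1-\frac qp\Big)=1-\frac1p,
$$
the Reiteration Theorem gives $\big(L^{q,\infty},\BMO\big)_{1-q/p,\,1}=(L^1,\BMO)_{1-1/p,1}=L^{p,1}$. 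Now the interpolation inequality (\ref{norm4tq}) for the pair $(L^{q,\infty},\BMO)$ at the parameter $\theta=1-q/p$ (and second index $1$) reads
$$
\|u\|_{(L^{q,\infty},\BMO)_{1-q/p,1}}\le C\,\|u\|_{L^{q,\infty}}^{\,q/p}\,\|u\|_\BMO^{\,1-q/p};
$$
when $u\in L^{q,\infty}\cap\BMO$ the right-hand side is finite, so $u$ lies in $(L^{q,\infty},\BMO)_{1-q/p,1}=L^{p,1}$ and satisfies (\ref{Lorentzone}).

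The step that genuinely needs care — and the only real obstacle I anticipate — is the use of the endpoint value $\theta_1=1$ in the Reiteration Theorem: one must justify that $\BMO$ itself can serve as the ``upper'' space of the pair $(L^1,\BMO)$, i.e.\ that $\BMO$ is of the appropriate class at $\theta=1$ (equivalently that $(L^1,\BMO)_{1,\infty}=\BMO$ with equivalent quantities), so that the reiteration identity persists at this boundary value rather than only for $0<\theta_0<\theta_1<1$. This belongs to the same body of results underlying Theorem \ref{BSBMO} and rests on the same estimate for $f^{**}-f^{*}$ in terms of $(f^{\sharp}_Q)^{*}$. If one prefers to sidestep it entirely, one can instead take $Y_1=L^{p_1,\infty}=(L^1,\BMO)_{1-1/p_1,\infty}$ for a finite $p_1>p$, obtain $\|u\|_{L^{p,1}}\le C\|u\|_{L^{q,\infty}}^{1-\eta}\|u\|_{L^{p_1,\infty}}^{\eta}$ by the same reiteration-plus-(\ref{norm4tq}) argument, bound $\|u\|_{L^{p_1,\infty}}\le\|u\|_{L^{p_1}}$ via Theorem \ref{thm:BMOineq}, and let $p_1\to\infty$ (checking that the constants remain controlled, which they do since $\eta$ stays bounded away from $0$ and $1$). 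Throughout, $\BMO$ is read modulo constants, so that $(L^1,\BMO)$ is genuinely a compatible pair of Banach spaces, exactly as is already implicit in the statement of Theorem \ref{BSBMO}.
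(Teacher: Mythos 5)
Your argument is essentially the paper's: identify the Lorentz spaces as real interpolation spaces between $L^1$ and $\BMO$ via Theorem \ref{BSBMO}, apply the Reiteration Theorem (Theorem \ref{reiteration}), and read off (\ref{Lorentzone}) from (\ref{norm4tq}). The one divergence is at the endpoint you flag: the paper never needs the identification $(L^1,\BMO)_{1,\infty}=\BMO$. It sets $\mathfrak B=(L^1,\BMO)_{1,\infty}$ and uses this space itself as the upper space $Y_1$ in the reiteration (the statement of Theorem \ref{reiteration} allows $\theta_1=1$ with $q_1=\infty$), giving $L^{p,1}=(L^{q,\infty},\mathfrak B)_{1-q/p,1}$; since $\BMO$ only appears on the right-hand side of the target inequality, the one-sided bound $\|f\|_{\mathfrak B}\le C\|f\|_\BMO$ --- which is just (\ref{norm4tq}) at $\theta=1$ --- suffices to convert $\|u\|_{L^{q,\infty}}^{q/p}\|u\|_{\mathfrak B}^{1-q/p}$ into $\|u\|_{L^{q,\infty}}^{q/p}\|u\|_\BMO^{1-q/p}$. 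So the deeper endpoint fact you worry about (and your $p_1\to\infty$ workaround via Theorem \ref{thm:BMOineq}) can be dispensed with; otherwise your proof matches the paper's.
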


Note that given the ordering of Lorentz spaces, $L^{p,1}\subset L^{p,p}=L^p$ and so this result implies Theorem \ref{thm:BMOineq} in the case $q>1$.

\begin{proof}
Using Theorem \ref{BSBMO}, since $q>1$ we have
$$
L^{q,s}=(L^1,\BMO)_{1-1/q,s};
$$
set $\mathfrak B=(L^1,\BMO)_{1,\infty}$. Note that from (\ref{norm4tq}) $\|f\|_{\mathfrak B}\le C\|f\|_\BMO$. Now simply use the Reiteration Theorem to obtain
$$
L^{p,r}=(L^{q,s},{\mathfrak B})_{(1-q/p),r},
$$
from which the inequality (\ref{Lorentzone}) follows immediately using (\ref{norm4tq}).
\end{proof}

\noindent(One can use interpolation spaces to provide a proof of Theorem \ref{thm:BMOineq} that does not involve Lorentz spaces by using interpolation only with $q=\infty$ and then interpolation between weak $L^p$ spaces, see McCormick et al.\ \cite{MRR}.)

\section{Afterword: The Marcinkiewicz interpolation theorem}\label{sec:Marcin}

Although we have not needed it here, one of the main uses of weak spaces arises due to the powerful Marcinkiewicz interpolation theorem, in which bounds in weak spaces at the endpoints lead to bounds in strong spaces in between. We include here a statement of the theorem\footnote{Be aware that to fit in with our other statements throughout the paper we have swapped the traditional roles of $p$ and $q$ in Theorem \ref{thm:Marcinkiewicz}.} and some straightforward consequences.

We say $T$ is sublinear if
$$
|T(f+g)|\le|Tf|+|Tg|\qquad\mbox{and}\qquad |T(\lambda f)|\le|\lambda||Tf|
$$
almost everywhere.

\begin{theorem}\label{thm:Marcinkiewicz}
Suppose that $q_0<q_1$ and that $T$ is a sublinear map defined on $L^{q_0}+L^{q_1}$ such that for some $p_0,p_1$
$$
\|Tf\|_{L^{p_0,\infty}}\le A_0\|f\|_{L^{q_0}}\qquad\mbox{and}\qquad \|Tf\|_{L^{p_1,\infty}}\le A_1\|f\|_{L^{q_1}}.
$$
If
\begin{equation}\label{pandq}
\frac{1}{q}=\frac{1-t}{q_0}+\frac{t}{q_1}\qquad\mbox{and}\qquad\frac{1}{p}=\frac{1-t}{p_0}+\frac{t}{p_1}
\end{equation}
and $p\ge q$ then $T\:L^q\to L^p$ and there exists a constant $A_t$ such that
\begin{equation}\label{Mmagic}
\|Tf\|_{L^p}\le A_t\|f\|_{L^q}.
\end{equation}
\end{theorem}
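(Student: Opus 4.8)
The plan is to run the classical Marcinkiewicz argument: for each height $\alpha$ we split $f$ at a level that is a suitable power of $\alpha$, bound the distribution function of $Tf$ using sublinearity and the two weak-type hypotheses, and then integrate. As usual we take the endpoint exponents to satisfy $q_j\le p_j$ (this is what makes the Minkowski step below legitimate, and it forces $p_1=\infty$ whenever $q_1=\infty$).

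By homogeneity of both sides of (\ref{Mmagic}), and since $|T(\lambda f)|=|\lambda|\,|Tf|$, we may normalise $\|f\|_{L^q}=1$. From (\ref{pandq}) with $0<t<1$ and $q_0<q_1$ we have $q_0<q<q_1$. We carry out the principal case $p_0<p_1$ in detail, so that also $p_0<p<p_1$; the case $p_0>p_1$ is the same after replacing the cut level by a decreasing power of $\alpha$, and $p_0=p_1$ (which forces $p=q$) is the classical diagonal case. Fix constants $\gamma,\beta>0$ to be chosen, and for each $\alpha>0$ write $f=g_\alpha+h_\alpha$ with $g_\alpha=f\chi_{\{|f|>\gamma\alpha^\beta\}}$ and $h_\alpha=f\chi_{\{|f|\le\gamma\alpha^\beta\}}$. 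Since $q_0<q$ we have $|f|^{q_0}\le(\gamma\alpha^\beta)^{q_0-q}|f|^q$ on $\{|f|>\gamma\alpha^\beta\}$, so $g_\alpha\in L^{q_0}$; similarly $h_\alpha\in L^{q_1}$ because $q_1>q$ (or $\|h_\alpha\|_{L^\infty}\le\gamma\alpha^\beta$ if $q_1=\infty$).

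Sublinearity gives $|Tf|\le|Tg_\alpha|+|Th_\alpha|$ almost everywhere, hence $d_{Tf}(\alpha)\le d_{Tg_\alpha}(\alpha/2)+d_{Th_\alpha}(\alpha/2)$, and the weak-type hypotheses turn this into
\[
d_{Tf}(\alpha)\le\left(\frac{2A_0\|g_\alpha\|_{L^{q_0}}}{\alpha}\right)^{\!p_0}+\left(\frac{2A_1\|h_\alpha\|_{L^{q_1}}}{\alpha}\right)^{\!p_1}.
\]
By the identity (\ref{df2Lp}) it follows that $\|Tf\|_{L^p}^p\le p(2A_0)^{p_0}J_0+p(2A_1)^{p_1}J_1$, where
\[
J_0=\int_0^\infty\alpha^{p-1-p_0}\Big(\int_{\R^n}|f|^{q_0}\chi_{\{|f|>\gamma\alpha^\beta\}}\,\d x\Big)^{p_0/q_0}\d\alpha
\]
and $J_1$ is its analogue with $(p_1,q_1)$ and $\chi_{\{|f|\le\gamma\alpha^\beta\}}$. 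Since $p_j/q_j\ge1$, Minkowski's integral inequality moves the $x$-integral out of the $L^{p_j/q_j}(\d\alpha)$-norm; the surviving one-dimensional integrals, $\int_0^{(|f(x)|/\gamma)^{1/\beta}}\!\alpha^{p-1-p_0}\d\alpha$ (finite as $p>p_0$) and $\int_{(|f(x)|/\gamma)^{1/\beta}}^\infty\!\alpha^{p-1-p_1}\d\alpha$ (finite as $p<p_1$), leave the powers $|f(x)|^{q_0+(p-p_0)q_0/(\beta p_0)}$ and $|f(x)|^{q_1+(p-p_1)q_1/(\beta p_1)}$ under $\int_{\R^n}\d x$. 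Choosing $\beta=(p-p_0)q_0/(p_0(q-q_0))$ makes the first exponent equal $q$, and a short computation using (\ref{pandq}) shows that this same $\beta$ makes the second exponent equal $q$ as well; both $x$-integrals then equal $\|f\|_{L^q}^q=1$, so $J_0\le C\gamma^{-a}$ and $J_1\le C\gamma^{b}$ for some $a,b>0$, and optimising over $\gamma$ yields $\|Tf\|_{L^p}\le A_t\|f\|_{L^q}$ with $A_t\le CA_0^{1-t}A_1^t$. In the case $q_1=p_1=\infty$, use instead $\|Th_\alpha\|_{L^\infty}\le A_1\gamma\alpha^\beta$ and pick $\gamma$ (with $\beta=1$) small enough that $A_1\gamma<\frac{1}{2}$; then $d_{Th_\alpha}(\alpha/2)=0$ and only the $J_0$ term remains.

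The only genuine difficulty is the exponent bookkeeping: one must verify that the relations (\ref{pandq}) with $0<t<1$ are precisely what makes the inner $\alpha$-integrals converge, what allows a single exponent $\beta$ to normalise both $J_0$ and $J_1$ to $\|f\|_{L^q}^q$, and what makes the optimisation over $\gamma$ reproduce the interpolation weights $1-t$ and $t$ on the constants $A_0,A_1$. Everything else — sublinearity, the identity (\ref{df2Lp}), and Minkowski's integral inequality (which is where $q_j\le p_j$ is used) — is elementary, and the endpoint $q_1=\infty$ is a minor variant of the same computation.
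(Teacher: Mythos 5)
Your argument is the standard real-variable proof of the Marcinkiewicz theorem, and the computation itself (splitting $f$ at level $\gamma\alpha^\beta$, using (\ref{df2Lp}) and (\ref{simple}), Minkowski's integral inequality, checking that a single $\beta$ normalises both terms to $\|f\|_{L^q}^q$, and optimising in $\gamma$) is sound as far as it goes. But it proves a weaker statement than the one claimed. You assume at the outset that $q_0\le p_0$ and $q_1\le p_1$, and this assumption is doing real work: Minkowski's integral inequality is applied with exponent $p_j/q_j$ and goes the wrong way when $p_j/q_j<1$. The theorem as stated assumes only $p\ge q$, which does not force $q_j\le p_j$ at the endpoints (one can have, say, $p_0<q_0$ and $p_1>q_1$ with the interpolated pair still satisfying $p\ge q$). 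This is precisely the distinction drawn in the text immediately after the theorem: the elementary argument you give is the one in Folland and covers only the case $p_0\ge q_0$, $p_1\ge q_1$; removing that restriction requires the more refined argument via decreasing rearrangements (Hunt's extension, or Theorem 1.4.19 in Grafakos), in which one proves the Lorentz-space bound $T\colon L^{q,s}\to L^{p,s}$ and then uses $L^{q}=L^{q,q}\subset L^{q,p}$ --- which is where the hypothesis $p\ge q$ actually enters.

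Two smaller points. First, the paper itself offers no proof of this theorem (it is stated with references only), so a complete argument here is genuinely new content, and yours is a correct and well-organised account of the restricted case --- which, incidentally, suffices for every application made in this paper (Corollary \ref{cor:FT} and Theorem \ref{bestYoung} both arrange $p_j\ge q_j$ explicitly). Second, your parenthetical that $p_0=p_1$ ``forces $p=q$'' is wrong: it forces $p=p_0=p_1$ but says nothing about $q$. In fact the standard statement requires $p_0\ne p_1$, since otherwise the inner integrals $\int\alpha^{p-1-p_j}\,\d\alpha$ both diverge and the strong-type conclusion can fail. To make your write-up match a provable statement, either add the hypotheses $p_0\ge q_0$, $p_1\ge q_1$ and $p_0\ne p_1$, or replace the Minkowski step by the rearrangement argument.
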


With the restriction that $p_0\ge q_0$ and $p_1\ge q_1$ one can find an elementary proof of this theorem in Folland \cite{F}. To remove this restriction requires a more refined argument using the decreasing rearrangements introduced in Section \ref{sec:ispaces}, see Theorem 1.4.19 in Grafakos \cite{Grafakos} or Hunt \cite{Hunt}.

We now give some interesting consequences of this theorem.


\subsection{The Fourier transform on $L^p$, $1\le p\le 2$}

We saw in Section \ref{sec:FT} that $\mathscr F$ maps $L^1$ into $L^\infty$ and $L^2$ into $L^2$, so the following result is immediate.

\begin{corollary}\label{cor:FT}
For $1\le p\le 2$ the Fourier transform is a bounded linear map from $L^p$ into $L^q$, where $(p,q)$ are conjugate.
\end{corollary}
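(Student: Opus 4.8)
The plan is to apply the Marcinkiewicz interpolation theorem (Theorem \ref{thm:Marcinkiewicz}) to the operator $T=\mathscr F$, which is not merely sublinear but genuinely linear, and which we have already seen in Section \ref{sec:FT} to be well-defined on all of $L^1+L^2$. The two endpoint hypotheses needed are exactly the facts recalled just before the statement: $\mathscr F$ maps $L^1$ into $L^\infty$ with $\|\hat f\|_{L^\infty}\le\|f\|_{L^1}$, and $\mathscr F$ maps $L^2$ into $L^2$ with $\|\hat f\|_{L^2}=\|f\|_{L^2}$. Since $L^\infty=L^{\infty,\infty}$ and, by Lemma \ref{Lp2Lpw}, $\|\hat f\|_{L^{2,\infty}}\le\|\hat f\|_{L^2}$, both can be recast as the weak-type bounds required by the theorem, with $(q_0,p_0)=(1,\infty)$, $(q_1,p_1)=(2,2)$, and $A_0=A_1=1$.

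With these choices the interpolation relations (\ref{pandq}) become $1/q=(1-t)+t/2=1-t/2$ and $1/p=t/2$, so that $1/p+1/q=1$: the exponents produced are automatically conjugate. As $t$ runs over $(0,1)$ the domain exponent $q$ runs over the open interval $(1,2)$, and the side condition $p\ge q$ (that is, $t/2\le1-t/2$) holds for every $t\le1$, hence is automatic. So Theorem \ref{thm:Marcinkiewicz} supplies, for each such pair, a constant $A_t$ with $\|\hat f\|_{L^p}\le A_t\|f\|_{L^q}$; relabelling $q$ as $p$ and $p$ as $q$ gives the boundedness of $\mathscr F\colon L^p\to L^q$ for $1<p<2$. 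The two endpoints $p=1$ and $p=2$ are precisely the already-established cases, so the corollary holds for the whole range $1\le p\le2$.

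There is essentially no obstacle here beyond bookkeeping. The only points that need a moment's care are remembering that the roles of $p$ and $q$ in Theorem \ref{thm:Marcinkiewicz} are the reverse of the traditional Hausdorff--Young convention (as flagged in the footnote to that theorem), and checking that its hypothesis $p\ge q$ is not vacuous but rather satisfied automatically in this configuration. One could instead give a self-contained proof via complex interpolation (the three-lines lemma), but since the machinery of Section \ref{sec:Marcin} is already at hand, the route above is the shortest.
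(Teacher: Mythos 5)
Your proof is correct and follows the same route as the paper, which deduces the corollary immediately from the $L^1\to L^\infty$ and $L^2\to L^2$ bounds for $\mathscr F$ via the Marcinkiewicz interpolation theorem (Theorem \ref{thm:Marcinkiewicz}); you merely spell out the bookkeeping (weak-type recasting via Lemma \ref{Lp2Lpw}, conjugacy of the exponents, the condition $p\ge q$, and the endpoints) that the paper leaves implicit.
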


\subsection{A sharpened version of Young's inequality}

Another application is the improved version of Young's inequality that was promised in Section \ref{sec:Young}.

\begin{theorem}\label{bestYoung}
  Suppose that $1<p,q,r<\infty$. If $f\in L^{q,\infty}$ and $g\in L^r$ with
$$
\frac{1}{p}+1=\frac{1}{q}+\frac{1}{r}
$$
then $f\star g\in L^p$ with
\begin{equation}\label{w2sYoung}
\|f\star g\|_{L^p}\le c_{p,q,r}\|f\|_{L^{q,\infty}}\|g\|_{L^r}.
\end{equation}
\end{theorem}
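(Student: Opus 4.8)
The plan is to obtain Theorem \ref{bestYoung} from the Marcinkiewicz interpolation theorem (Theorem \ref{thm:Marcinkiewicz}) by fixing $f \in L^{q,\infty}$ and viewing convolution with $f$ as a sublinear (in fact linear) operator $T_f g = f \star g$ acting on the variable $g$. The weak form of Young's inequality, Proposition \ref{prop:wYoung}, already tells us that $T_f$ maps $L^r \to L^{p,\infty}$ whenever $1 \le r < \infty$, $1 < p, \tilde q < \infty$, and $1/p + 1 = 1/\tilde q + 1/r$; so the strategy is to pick two source exponents $r_0 < r_1$ straddling the given $r$, apply Proposition \ref{prop:wYoung} at each endpoint to get weak-type bounds into $L^{p_0,\infty}$ and $L^{p_1,\infty}$, and then interpolate.

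First I would set up the endpoints. Given $1 < p, q, r < \infty$ with $1/p + 1 = 1/q + 1/r$, choose $r_0, r_1$ with $1 < r_0 < r < r_1 < \infty$, both close enough to $r$ that the corresponding output exponents $p_i$ defined by $1/p_i + 1 = 1/q + 1/r_i$ remain in $(1,\infty)$; this is possible since $p \in (1,\infty)$ and the relation is a homeomorphism in the relevant range. By Proposition \ref{prop:wYoung}, for $i = 0, 1$ we have
$$
\|f \star g\|_{L^{p_i,\infty}} \le c_{p_i,q,r_i}\|f\|_{L^{q,\infty}}\|g\|_{L^{r_i}},
$$
so $T_f$ is of weak types $(r_0, p_0)$ and $(r_1, p_1)$ with operator constants $A_i = c_{p_i,q,r_i}\|f\|_{L^{q,\infty}}$. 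Next I would check that $r$ and $p$ are the correct interpolated exponents: solving $1/r = (1-t)/r_0 + t/r_1$ for the appropriate $t \in (0,1)$, one verifies directly from the linear relation $1/p_i = 1/q + 1/r_i - 1$ that the same $t$ gives $1/p = (1-t)/p_0 + t/p_1$. I would also need the hypothesis $p \ge r$ of Theorem \ref{thm:Marcinkiewicz} (in its notation, output exponent $\ge$ source exponent); since $1/p = 1/q + 1/r - 1 \le 1/r$ precisely when $1/q \le 1$, i.e. $q \ge 1$, which holds, this is automatic. Marcinkiewicz then yields $\|f \star g\|_{L^p} \le A_t \|g\|_{L^r}$ with $A_t$ a constant times $\|f\|_{L^{q,\infty}}$ (the dependence is linear because each $A_i$ is linear in $\|f\|_{L^{q,\infty}}$ and Marcinkiewicz's constant is built from $A_0^{1-t} A_1^t$), giving \eqref{w2sYoung}.

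The main obstacle, such as it is, is the bookkeeping at the endpoints: one must confirm that $r_0, r_1$ can genuinely be chosen with all six exponents $r_i, p_i$ strictly inside $(1,\infty)$ so that Proposition \ref{prop:wYoung} applies at both ends, and that the version of Marcinkiewicz being invoked (which, per the remark after Theorem \ref{thm:Marcinkiewicz}, needs either $p_i \ge r_i$ or the refined rearrangement argument) actually covers the configuration at hand. Since here $1/p_i = 1/q + 1/r_i - 1 \le 1/r_i$ forces $p_i \ge r_i$ at each endpoint, even the elementary form of Marcinkiewicz from Folland \cite{F} suffices, so no delicate version is required. Tracking the precise power of $\|f\|_{L^{q,\infty}}$ through the interpolation constant is the only genuinely computational point, and it comes out to the first power as claimed by homogeneity. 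I would close by remarking that this strengthens Proposition \ref{prop:wYoung} exactly as advertised in Section \ref{sec:Young}: the left-hand space is upgraded from $L^{p,\infty}$ to $L^p$ at the cost of excluding the endpoint $r = 1$.
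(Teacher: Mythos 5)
Your proposal is correct and follows essentially the same route as the paper: fix $f$, apply Proposition \ref{prop:wYoung} at two source exponents straddling $r$, check that the relation $1/p_i+1=1/q+1/r_i$ forces target $\ge$ source so the elementary Marcinkiewicz theorem applies, and interpolate. The only cosmetic difference is that the paper normalises $\|f\|_{L^{q,\infty}}=1$ and invokes linearity in $f$ at the end, whereas you track the linear dependence on $\|f\|_{L^{q,\infty}}$ through the interpolation constant; both are fine.
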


\begin{proof}
Note that it follows from the conditions on $p,q,r$ that $p>q$.
Fix $f\in L^{q,\infty}$ with $\|f\|_{L^{q,\infty}}=1$, and consider the linear operator $T(g)=f\star g$. Since $1<p,q<\infty$ we can find $p_0<p<p_1$, $q_0<q<q_1$, and $0<t<1$ such $p_0\ge q_0$, $p_1\ge q_1$, and (\ref{pandq}) holds. Now using the weak form of Young's inequality from Proposition \ref{prop:wYoung},
$$
\|f\star g\|_{L^{p_0,\infty}}\le C\|g\|_{L^{q_0}}\qquad\mbox{and}\qquad\|f\star g\|_{L^{p_1,\infty}}\le C\|g\|_{L^{q_1}}.
$$
We can now use the Marcinkiewicz interpolation theorem to guarantee that
$$
\|f\star g\|_{L^p}\le C\|g\|_{L^q}.
$$
Since $f\star g$ is also linear in $f$, we obtain (\ref{w2sYoung}).
\end{proof}

\subsection{Endpoint Sobolev embedding, revisited}

Using Theorem \ref{bestYoung} and the fact that if $P_\alpha(x)=|x|^{-\alpha}$ then $[\hat P_\alpha](\xi)=c_{n,\alpha}P_{n-\alpha}(\xi)$ (this can be checked by simple calculation) we can give a very quick alternative proof of the endpoint Sobolev embedding, after Theorem 1.38 in Bahouri et al.\ \cite{C**}.

\begin{theorem}
For $2<p<\infty$ there exists a constant $c=c_{n,p}$ such that if $f\in\dot H^s(\R^n)$ with $s=n(1/2-1/p)$ then $f\in L^p(\R^n)$ and $\|f\|_{L^p}\le c\|f\|_{\dot H^s}$.
\end{theorem}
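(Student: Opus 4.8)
The plan is to realise the endpoint Sobolev embedding as an instance of the sharpened Young inequality (Theorem \ref{bestYoung}) applied to a convolution with the Riesz potential. Write $f = \mathscr F^{-1}(|\xi|^{-s}\,|\xi|^{s}\hat f)$, and set $g = \mathscr F^{-1}(|\xi|^s\hat f)$, so that $g\in L^2$ with $\|g\|_{L^2}=\|f\|_{\dot H^s}$. The multiplier $|\xi|^{-s}$ is, up to a constant, the Fourier transform of $P_{n-s}(x)=|x|^{-(n-s)}$ (using the stated fact $[\hat P_\alpha](\xi)=c_{n,\alpha}P_{n-\alpha}(\xi)$ with $\alpha=n-s$, which is admissible since $0<s<n/2<n$), and therefore
$$
f = c_{n,s}\,P_{n-s}\star g,\qquad P_{n-s}(x)=|x|^{-(n-s)}.
$$

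Next I would check the integrability of the kernel in a weak space: by (\ref{x2minus}), $|x|^{-n/q}\in L^{q,\infty}(\R^n)$ for any $q$, so $P_{n-s}=|x|^{-(n-s)}\in L^{q,\infty}(\R^n)$ precisely when $n-s = n/q$, i.e.\ $q = n/(n-s)$. With $s=n(1/2-1/p)$ one computes $n-s = n(1/2+1/p)$, so $q = 2p/(p+2)$, and since $2<p<\infty$ this gives $1<q<2$. Then the Young exponent relation $1/p + 1 = 1/q + 1/2$ holds with $r=2$: indeed $1/q+1/2 = (p+2)/(2p) + 1/2 = 1/p + 1$. Since $1<q<2<p<\infty$ and $r=2\in(1,\infty)$, all three exponents lie strictly between $1$ and $\infty$, so Theorem \ref{bestYoung} applies and yields
$$
\|f\|_{L^p} = c_{n,s}\|P_{n-s}\star g\|_{L^p}\le c_{n,s}\,c_{p,q,2}\,\|P_{n-s}\|_{L^{q,\infty}}\|g\|_{L^2} = c_{n,p}\,\|f\|_{\dot H^s},
$$
which is exactly (\ref{endpt}).

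The only genuine points requiring care are the justification that the manipulations on the Fourier side are legitimate for $f\in\dot H^s$ — i.e.\ that $|\xi|^s\hat f$ really defines an $L^2$ function and that $f$ equals the stated convolution as tempered distributions — and the verification of the Riesz potential identity $[\hat P_\alpha]=c_{n,\alpha}P_{n-\alpha}$ in the range $0<\alpha<n$, which the excerpt asks us to take as a simple calculation (it is classical, via analytic continuation or testing against Gaussians). Given those, the proof is just the bookkeeping of exponents carried out above; the main obstacle, such as it is, is checking that $P_{n-s}$ sits in the right weak space and that the Young relation closes with $r=2$, both of which reduce to the single computation $n-s=n/q$ with $q=2p/(p+2)$.
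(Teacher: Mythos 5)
Your argument is essentially identical to the paper's own proof of this statement: the paper also writes $\gamma(\xi)=|\xi|^s\hat f(\xi)$, sets $g=\mathscr F^{-1}\gamma\in L^2$ with $\|g\|_{L^2}=\|f\|_{\dot H^s}$, identifies $f=c\,P_{n-s}\star g$ via $[\hat P_\alpha]=c_{n,\alpha}P_{n-\alpha}$, notes $P_{n-s}\in L^{n/(n-s),\infty}$, and concludes by Theorem \ref{bestYoung}. Your explicit exponent check $q=2p/(p+2)\in(1,2)$ and the Young relation with $r=2$ is correct and just makes the paper's bookkeeping explicit, so there is nothing to add.
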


\begin{proof} We make the pointwise definition $\gamma(\xi)=|\xi|^s\hat f(\xi)$; since $f\in\dot H^s(\R^n)$, $\gamma\in L^2(\R^n)$. If we set $g={\mathscr F}^{-1}\gamma$ then $g\in L^2(\R^n)$ and $\|g\|_{L^2}=\|\gamma\|_{L^2}=\|f\|_{\dot H^s}$. Now,
$$
\hat f(\xi)=\frac{|\xi|^s\hat f(\xi)}{|\xi|^s}=\hat g(\xi)|\xi|^{-s},
$$
and so $f=g\star c_{n,n-s}^{-1}P_{n-s}$. Since $P_{n-s}\in L^{n/(n-s),\infty}$ and $g\in L^2$ it follows from Theorem \ref{bestYoung} that $f\in L^p(\R^n)$.\end{proof}

\subsection*{Acknowledgment}
We would like to thanks Jacob Azzam for his helpful comments, in particular for pointing out a number of references including \cite{CZ}. JCR would like to thank Franco Tomarelli for the invitation to speak in the Seminario Matematico e Fisico di Milano and subsequently to write this paper for the {\it Milan Journal of Mathematics}.

\end{document}